\numberwithin{equation}{section}
\theoremstyle{plain}
\newtheorem{Th}{Theorem}[section]
\newtheorem{Lemma}[Th]{Lemma}
\newtheorem{Prop}[Th]{Proposition}
 \theoremstyle{definition}
\newtheorem{Def}[Th]{Definition}
\newtheorem*{Rem}{Remark}
\newtheorem{?}[Th]{Problem}
\newtheorem{Ex}[Th]{Example}
\newcommand{\sk}[1]{\langle #1 \rangle }
\newcommand \iS {\mathcal{S}}
\newcommand \re {\mathrm{Re}\:}
\newcommand{\setZ}{\ensuremath{\mathbb{Z}}}
\newcommand{\RR}{\ensuremath{\mathbb{R}}}
\newcommand{\1}{{\bf 1}}
\newcommand{\E}{{\bf E}}
\newcommand{\D}{{\bf D}}
\newcommand{\Df}{\overline \Delta}
\newcommand{\df}{\overline \delta}
\renewcommand{\Pr}{{\bf P}}
\begin{document}

\title{Difference sets and positive exponential sums I. General properties}

\author{M\'at\'e Matolcsi}
\address{Alfr\'ed R\'enyi Institute of Mathematics\\
     Budapest, Pf. 127\\
     H-1364 Hungary\\
     (also at BME Department of Analysis, Budapest, H-1111, Egry J. u. 1)
} \email{matomate@renyi.hu}

\thanks{The authors were supported by the ERC-AdG 228005, and OTKA Grant No. K81658, and M.M. also by the Bolyai Scholarship.}

\author{Imre Z. Ruzsa}
\address{Alfr\'ed R\'enyi Institute of Mathematics\\
     Budapest, Pf. 127\\
     H-1364 Hungary
}
\email{ruzsa@renyi.hu}

\subjclass[2000]{11B50, 11B75, 11P70}

\begin{abstract}
We describe general connections between intersective properties of sets in Abelian groups and positive exponential sums. In particular, given a set $A$ the maximal size of a set whose difference set avoids $A$ will be related to positive exponential sums using frequencies from $A$.
\end{abstract}

\maketitle

\section{Introduction}\label{secintro}

This work studies the difference-intersective property of sets,
that is, the maximal size (or density) of a set whose difference set avoids a given set. We will explore connections to positive exponential sums using frequencies from the given set.
In this first part we establish some general properties for sets in finite
commutative groups. In the second part
we plan to consider power residues in ${\mathbb {Z}}_m$, and in the third part sets of $k$-th
powers in ${\mathbb {Z}}$.

Difference sets are always symmetric and contain 0; similarly, the
spectrum of a positive exponential sum is symmetric and contains 0. This
motivates the following definition.

\begin{Def} 
Let $G$ be a finite commutative group. We call a set $A\subset G$ a \emph{standard
set}, if $A=-A$ and $0\in A$.
\end{Def}

We found the above version the most comfortable to work with; other
versions are also possible.

\begin{Def}  \label{deltak}
Let $G$ be a finite commutative group, $|G|=q$, and let $A\subset G$ be a
standard set. Write
\begin{equation*} \begin{aligned}
      \Delta (A) &= \max \bigl\{|B|: B\subset G, (B-B)\cap A = \{0\}  \bigr \},  \\
   \overline \Delta (A) & = \max \bigl\{|B|: B\subset G, B-B\subset A \bigr \},
  \end{aligned} \end{equation*}

\[   \delta (A) = \Delta (A)/q,  \]
\[   \overline \delta (A) = 1/\overline \Delta (A).  \]
We call $\delta (A)$ the \emph{measure of intersectivity} of the set $A$.
\end{Def}

Next we list the quantities related to positive character sums. We fix
our notation as follows. A \emph{character} is a homomorphism into
\[   {\mathbb {C}}_1 = \{z\in {\mathbb {C}}: |z|=1\} .  \] The set of all characters is the {\it dual group} of $G$, denoted by $\hat{G}$.
 We will use  additive notation  for $G$ and multiplicative notation for $\hat{G}$, and accordingly $\1\in \hat{G}$ denotes the
identity element of $\hat{G}$, the principal character.
The \emph{Fourier transform} of a function $f$ on $G$ is defined as
\[   \hat f(\gamma ) = \sum _{x\in G} \gamma (x) f(x) . \]

We define certain classes of functions, whose behaviour on $A$ and $G\setminus A$ is prescribed in
various senses. The notation $f\not\equiv 0$ means that $f$ is not identically zero. Put

\begin{equation*} \begin{aligned}
   \mathcal S(A) & = \left \{f: G\to {\mathbb {R}}, f\not\equiv 0, f|_{G\setminus A} = 0 \right \},  \\
   \mathcal S^-(A) & = \left \{f: G\to {\mathbb {R}}, f\not\equiv 0, f|_{G\setminus A} \leq 0 \right \},  \\
   \mathcal S^+(A) & = \left \{f: G\to {\mathbb {R}}, f\not\equiv 0, f|_{G\setminus A} =0, f|_A\geq 0 \right \},  \\
   \mathcal S^\pm (A) & = \left \{f: G\to {\mathbb {R}}, f\not\equiv 0, f|_{G\setminus A} \leq 0, f|_A\geq 0 \right \}.
 \end{aligned} \end{equation*}

These classes of functions are used to define the relevant quantities in relation with positive exponential sums.

\begin{Def}  \label{lambdak}
Let $G$ be a finite commutative group, $|G|=q$, and let $A\subset G$ be a
standard set. Write
\[   \lambda (A) = \min \left  \{\frac{f(0)}{\hat f(\1)}: f\in  \mathcal S(A), \hat f(\gamma )\geq 0 \text{ for all } \gamma \right \}, \]
\[   \lambda ^-(A) = \min \left  \{\frac{f(0)}{\hat f(\1)}: f\in  \mathcal S^-(A), \hat f(\gamma )\geq 0 \text{ for all } \gamma\right \}, \]
\[   \lambda ^+(A) = \min \left  \{\frac{f(0)}{\hat f(\1)}: f\in  \mathcal S^+(A), \hat f(\gamma )\geq 0 \text{ for all } \gamma  \right\}, \]
\[   \lambda ^\pm (A) = \min \left  \{\frac{f(0)}{\hat f(\1)}: f\in  \mathcal S^\pm (A), \hat f(\gamma )\geq 0 \text{ for all }\gamma\right  \}. \]
Sometimes $\lambda (A)$ is called the \emph{Tur\'an constant}, $\lambda ^-(A)$ the
\emph{Delsarte constant} of the set $A$ (for the history of these names and some related problems see \cite{revesz}).
\end{Def}

Of these quantities $\lambda ^\pm $ seems to be the least interesting; we include it
to exhaust all possible combinations of restrictions on $A$ and $G\setminus A$.
Seemingly these definitions depend on the ambient group $G$;  in the next
section we will show that this is not the case, so the notations are justified.

\medskip

We shall study inequalities between these numbers; how they change under
set-theoretical operations (union, intersection, complement, direct product);
and how they behave for a random set.

The main inequality connecting the various $\delta$ and $\lambda$ quantities is the following.

\begin{Th}  \label{basic}
Let $G$ be a finite commutative group, $|G|=q$, and let $A\subset G$ be a
standard set. We have
\begin{equation} \label{foegyenlotlenseg}
     1/q \leq \delta (A) \leq  \lambda ^-(A) \leq  \left\{ \begin{matrix} \lambda (A) \\ \lambda ^\pm (A) \end{matrix}
\right\} \leq  \lambda ^+(A) \leq  \overline \delta (A) \leq  1 .
\end{equation}

All the inequalities can hold with equality, as well as with strict
inequality. There is no inequality between $\lambda (A) $ and $ \lambda ^\pm (A)$; each can be
greater than the other, and they can also be equal.
\end{Th}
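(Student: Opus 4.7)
The plan is to decompose the chain into three types of inequalities: two boundary bounds, four trivial comparisons within the $\lambda$-block, and two genuine Delsarte--Tur\'an type inequalities. The boundary bounds $1/q \leq \delta(A)$ and $\df(A) \leq 1$ both follow from the admissible choice $B = \{0\}$, which works for $\Delta$ and $\Df$ alike because $A$ is standard. The four middle comparisons $\lambda^-(A) \leq \lambda(A), \lambda^\pm(A) \leq \lambda^+(A)$ reduce to the class inclusions $\mathcal{S}^+(A) \subset \mathcal{S}(A) \subset \mathcal{S}^-(A)$ and $\mathcal{S}^+(A) \subset \mathcal{S}^\pm(A) \subset \mathcal{S}^-(A)$, which are immediate from the definitions, together with the observation that minimizing the fixed functional $f \mapsto f(0)/\hat f(\1)$ over a smaller admissible class can only raise the infimum.

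My approach to the left-hand substantive inequality $\delta(A) \leq \lambda^-(A)$ is the standard Delsarte sandwich: fix $B$ attaining $\Delta(A)$ and $f \in \mathcal{S}^-(A)$ with $\hat f \geq 0$, and evaluate $S := \sum_{x, y \in B} f(x - y)$ in two ways. Setting $r_B(z) = |\{(x,y) \in B^2 : x - y = z\}|$, the spatial side gives $S = \sum_z f(z) r_B(z)$; since $r_B$ is supported on $B - B$ and $(B - B) \cap A = \{0\}$, every $z \neq 0$ in the support lies outside $A$, forcing $f(z) \leq 0$ and hence $S \leq f(0) |B|$. The spectral side uses $\hat{r_B} = |\hat{\1_B}|^2 \geq 0$ and Plancherel to give $S = q^{-1} \sum_\gamma \hat f(\gamma) |\hat{\1_B}(\gamma)|^2 \geq q^{-1} \hat f(\1) |B|^2$, where only the principal-character term is retained. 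Combining and dividing by $|B|$ yields $|B|/q \leq f(0)/\hat f(\1)$, and the infimum over $f$ is $\lambda^-(A)$. For the dual inequality $\lambda^+(A) \leq \df(A)$, I would take $B$ attaining $\Df(A)$, so $B - B \subset A$, and plug the explicit test function $f := r_B$ into the Tur\'an problem: $f \geq 0$ is supported in $B - B \subset A$ so $f \in \mathcal{S}^+(A)$; its Fourier transform $|\hat{\1_B}|^2$ is nonnegative; and $f(0)/\hat f(\1) = |B|/|B|^2 = \df(A)$, as required.

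For the equality and strict-inequality assertions, the trivial set $A = \{0\}$ collapses the whole chain to the single value $1$, and $A = G$ likewise makes every quantity degenerate; separation of the links should then be witnessed by working through small cyclic groups $\ZZ_p$ or elementary abelian $\ZZ_2^n$ and computing Fourier transforms directly. The delicate piece is the non-comparability of $\lambda(A)$ and $\lambda^\pm(A)$: since $\mathcal{S}(A)$ and $\mathcal{S}^\pm(A)$ are not comparable as function classes, one needs a first family of examples in which the optimizer for $\lambda$ happens to be pointwise nonnegative off $A$ (so that the $\mathcal{S}^\pm$ sign-constraint bites and pushes $\lambda^\pm$ strictly above $\lambda$), and a second family in which the optimizer for $\lambda^\pm$ exploits genuinely negative values outside $A$ to beat every function supported in $A$. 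Finding these separating examples in small groups is the step I expect to be the main obstacle, the remainder of the theorem being essentially formal.
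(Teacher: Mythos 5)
Your proof of the chain of inequalities is correct and is essentially identical to the paper's: the boundary bounds come from $B=\{0\}$, the four middle links come from the class inclusions $\mathcal S^+\subset\mathcal S\subset\mathcal S^-$ and $\mathcal S^+\subset\mathcal S^\pm\subset\mathcal S^-$, the inequality $\delta(A)\leq\lambda^-(A)$ is the Delsarte double counting of $\sum_{x,y\in B}f(x-y)$ (the paper starts this computation on the spectral side with $\sum_\gamma\hat f(\gamma)|\hat B(\gamma)|^2$, you start on the spatial side; these differ only by a factor of $q$), and $\lambda^+(A)\leq\df(A)$ uses the same explicit test function $\1_B*\1_{-B}$. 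For the equality/strict-inequality and the non-comparability of $\lambda$ and $\lambda^\pm$, the paper also defers to later constructions (Proposition 1.6 for the collapsed extremes, Sections 8--9 and Example 9.4 for separations); one shortcut you might note for the non-comparability is that by the duality of Theorem 4.2, $\lambda$ and $\lambda^\pm$ are both self-dual, so a single example with $\lambda(A)<\lambda^\pm(A)$ automatically yields $\lambda(A')>\lambda^\pm(A')$ for the standard complement $A'$, giving the other direction for free.
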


We will prove this theorem in Section \ref{secbasicineq}. The main unsolved problem is whether there is any connection between these
quantities in the other direction.

\begin{?}  \label{problem1}
Is there a function $f: [0,1]\to [0,1]$ such that $f(x)\to 0$ as $x\to 0$ and we
have always $\lambda ^-(A)\leq f\bigl( \delta (A) \bigr) $?
Is there such a function for which we have always $\lambda (A)\leq f\bigl( \lambda ^-(A) \bigr) $?
\end{?}

This question can be asked for any other pair of the quantities defined
above. We have the following partial answer.

\begin{Th} \label{norelations}

(a) Let $G$ be a finite commutative group, $|G|=q$, and assume that $3 \nmid  q$. There is a standard set
$A\subset G$ such that $\overline \delta (A)=1/2$ and
\[   \lambda ^+(A) \leq  c q^{-1/6} ( \log q)^{1/2},  \]
with an absolute constant $c$.

(b)  Let $\varepsilon >0$. For every sufficiently large $n$ there is a standard set
$A\subset {\mathbb {Z}}_2^n $ such that
\[   \lambda(A)<\varepsilon , \ \lambda^\pm(A)> 1/2 - \varepsilon  .  \]

(c) Let $\varepsilon >0$. For infinitely many values of $q$ there is a standard set
$A\subset {\mathbb {Z}}_q$ such that
\[   \delta (A)<\varepsilon , \ \lambda^+(A)> 1/2 - \varepsilon  .  \]

\end{Th}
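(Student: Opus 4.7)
Each of the three parts establishes a separation between two quantities in~\eqref{foegyenlotlenseg}, and so requires a witness standard set $A$. The common strategy is to force the ``large'' quantity (one of $\df(A)$, $\lambda^\pm(A)$, or $\lambda^+(A)$) close to its upper bound of $1/2$ by making $A$ sum-free (i.e.\ $\Df(A)=2$, no $x,y\in A\setminus\{0\}$ with $x+y\in A\setminus\{0\}$), and to bound the complementary quantity from above by constructing either a positive-definite test function supported on $A$ (for the $\lambda$'s) or a small set whose difference set avoids $A$ (for $\delta$). The key tension is that sum-freeness of $A$ rules out the simplest pos-def functions on $A$, the convolution squares $\1_B*\1_{-B}$ with $|B|\ge 3$.

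For part (a), the hypothesis $3\nmid q$ ensures that sum-freeness is attainable without obstruction: if $3\mid q$ and $q/3\in A$, the three-element subgroup $\{0,q/3,2q/3\}$ already forces $\Df(A)\ge 3$. To achieve $\lambda^+(A)\le c\,q^{-1/6}\sqrt{\log q}$ I would use a randomized construction. Pick a random symmetric $B\subset G$ of size $m\sim q^{1/3}$; the candidate $f=\1_B*\1_{-B}$ satisfies $f\ge 0$, $\hat f=|\hat{\1}_B|^2\ge 0$, and $f(0)/\hat f(\1)=1/m\sim q^{-1/3}$, with support $B-B$ of size $\sim q^{2/3}$. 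Then take $A$ to be a sum-free standard set capturing most of $B-B$, by deleting the few configurations in $B-B$ that carry Schur triples and renormalizing $f$ accordingly. The resulting loss of a factor $q^{1/6}\sqrt{\log q}$ in the ratio reflects a union/concentration bound over the $\sim q^{2/3}$ potential bad triples. The main obstacle is showing that the renormalized $f$ retains positive-definiteness after the surgery.

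For parts (b) and (c), the large quantities $\lambda^\pm(A)$ and $\lambda^+(A)$ each lie below $\df(A)$, so the witness $A$ is again sum-free. In (b), take $G=\ZZ_2^n$ and let $A$ be a carefully chosen sum-free standard set: the abundance of $\pm 1$-valued characters in $\ZZ_2^n$ makes the linear feasibility problem ``find $c_\gamma\ge 0$ with $\sum_\gamma c_\gamma\gamma$ supported on $A$ and small $\sum c_\gamma/c_\1$'' solvable, yielding $\lambda(A)<\varepsilon$; the lower bound $\lambda^\pm(A)>1/2-\varepsilon$ uses that, when $\Df(A)=2$, any $f\ge 0$ on $A$, $\le 0$ off $A$, with $\hat f\ge 0$ has its positive part essentially concentrated on a pair $\{0,x\}\subset A$. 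For (c), take $A$ to be a random symmetric subset of the middle third $\{x\in\ZZ_q:q/3<x<2q/3\}$ at density $\sim\varepsilon$, which is automatically sum-free; $\delta(A)<\varepsilon$ is a first-moment computation showing that every size-$\varepsilon q$ set has a difference in $A$ with high probability, and $\lambda^+(A)>1/2-\varepsilon$ is a rigidity claim: for a thin random sum-free $A$, no non-trivial positive $f\ge 0$ supported on $A$ with $\hat f\ge 0$ beats the convolution-square floor of $1/2$ significantly. The hardest step in each part is the matching lower bound for the ``large'' quantity.
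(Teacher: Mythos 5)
Your proposal diverges from the paper on all three parts and contains genuine gaps.

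For part (a), your proposed test function $f=\1_B*\1_{-B}$ cannot be made to work: for $f$ to lie in $\iS^+(A)$ we need $B-B\subset A$, but you are demanding $\Df(A)=2$ (so that $\df(A)=1/2$), which exactly forbids any $3$-element set $B$ with $B-B\subset A$. The ``surgery'' of deleting Schur triples from $B-B$ and renormalizing breaks precisely the positive-definiteness you need, and there is no fix along these lines. You have correctly identified this as the obstacle, but it is fatal, not a loose end. The paper instead takes a random standard set $R$ with parameter $\rho\sim q^{-2/3}$ and uses a completely different class of test function: essentially $\1_R$ with a constant $a\sim\sqrt{\rho q\log q}$ added at $0$, so that $\hat f(\gamma)=\hat\1_R(\gamma)+a\ge 0$ by a union bound over all $q$ characters (Theorem~\ref{randomlambda}), while $\df(R)=1/2$ comes from the separate threshold estimate of Theorem~\ref{kettoharom}. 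This avoids convolution squares entirely.

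For part (b), your outline is too vague to be evaluated as a proof, and more importantly it omits the one ingredient the paper singles out as the hard part, namely Samorodnitsky's lower bound for Delsarte's linear program (Lemma~3.3 of~\cite{samorodnitsky98}). The paper takes $A=A_k$, the antiball $\{x:\|x\|>k\}\cup\{0\}$ in $\ZZ_2^n$ with $k$ close to $n$: Samorodnitsky's inequality gives $\lambda(B_k)\ge c\,\alpha^{1/4}q^{-\beta}$, hence by duality $\lambda(A_k)\le c'\alpha^{-1/4}q^{\beta-1}\to 0$; and the lower bound $\lambda^\pm(A_k)\ge 1-n/(2k+2)$ is not a ``rigidity'' principle but is obtained by exhibiting the explicit linear function $f(x)=2(k+1-\|x\|)\in\iS^\pm(B_k)$ and dualizing. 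Your proposed ``linear feasibility'' argument and your claimed rigidity of $\lambda^\pm$ for sum-free sets are unsubstantiated; in particular, $\Df(A)=2$ does not in general force $\lambda^\pm(A)$ to be close to $1/2$ (Theorem~\ref{basic} places $\lambda^\pm$ anywhere in $[\lambda^-,\lambda^+]$, and random sum-free sets have $\lambda^+$ far below $1/2$).

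For part (c), the paper does not prove this at all but attributes it to Bourgain~\cite{bourgain}, and your proposed random construction cannot succeed. A random subset of the middle third of $\ZZ_q$ at positive density is exactly the setting where Theorem~\ref{randomlambda} applies and gives $\lambda^+\lesssim q^{-1/2}(\log q)^{1/2}\to 0$, the opposite of the required $\lambda^+>1/2-\varepsilon$. The whole point of (c) is that $\delta$ and $\lambda^+$ can be forced far apart, which requires a deliberately structured set, not a random one; random sets push $\delta$ and $\lambda^+$ together.
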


We will prove part (a) of this theorem in Section \ref{secrandom} and part (b) in Section  \ref{secdyadic}.
Part (c) is essentially a theorem of Bourgain \cite{bourgain}
Bourgain's setting and terminology is quite different from ours. We do not give an account of his method
in the hope that the stronger result in part (b) can also be extended to cyclic groups. We also remark here
that the most difficult part in the proof of part (b) is a result of Samorodnitsky \cite{samorodnitsky98}; more details are given in Section \ref{secdyadic}.

\medskip

Most of the defined quantities make sense also in infinite groups; the
exception is $\delta $, whose definition involves division by $q$. Here the proper
generalization involves a concept of density; a very general formulation in locally Abelian groups can be found in a
paper of R\'ev\'esz \cite{revesz}.
Here we restrict our attention to the finite case.

\medskip

It seems to be difficult to say anything nontrivial about the cases of
equality in Theorem \ref{basic}. However, the extremal values are easily described.

\begin{Prop} \label{extremcases}
Let $G$ be a finite commutative group, $|G|=q$, and let $A\subset G$ be a
standard set.

(a) If $A=G$, then
\begin{equation} \label{minimum}
\delta (A) = \lambda ^-(A) = \lambda (A) = \lambda ^\pm (A) = \lambda ^+(A) = \overline \delta (A) = 1/q .
\end{equation}
In any other case $\delta (A)\geq 2/q$.

(b) If $A=\{0\}$, then
\begin{equation} \label{maximum}
\delta (A) = \lambda ^-(A) = \lambda (A) = \lambda ^\pm (A) = \lambda ^+(A) = \overline \delta (A) = 1 .
\end{equation}
In any other case $\overline \delta (A)\leq 1/2$.

\end{Prop}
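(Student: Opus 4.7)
The plan is to compute each quantity directly in the two extremal cases $A = G$ and $A = \{0\}$, and to exhibit a two-element witness in the converse statements. The symmetry hypothesis $A = -A$ will promote a single ``bad'' element to a symmetric pair.

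\textbf{Part (a).} When $A = G$, the condition $(B-B)\cap A = \{0\}$ degenerates to $B - B = \{0\}$, forcing $|B| \le 1$, so $\Delta(A) = 1$ and $\delta(A) = 1/q$; dually $B - B \subset G$ is automatic, giving $\overline{\Delta}(A) = q$ and $\overline{\delta}(A) = 1/q$. For the Fourier quantities I would use the inversion estimate
\[
f(0) \;=\; \frac{1}{q}\sum_\gamma \hat f(\gamma) \;\ge\; \frac{1}{q}\hat f(\1),
\]
valid whenever $\hat f \ge 0$, to bound every $\lambda$-quantity from below by $1/q$; equality is realised in the most restrictive class $\mathcal{S}^+(A)$ by $f \equiv 1$, whose Fourier transform is concentrated at $\1$. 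Theorem \ref{basic} then squeezes all intermediate quantities to $1/q$. For the second assertion, if $A \ne G$ I pick $a \notin A$; by standardness $-a \notin A$ as well, so $B = \{0,a\}$ satisfies $(B-B)\cap A = \{0\}$, giving $\delta(A) \ge 2/q$.

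\textbf{Part (b).} When $A = \{0\}$, the condition $(B-B)\cap \{0\} = \{0\}$ is vacuous (since $0 \in B-B$ always), so $\Delta(A) = q$ and $\delta(A) = 1$; dually $B - B \subset \{0\}$ forces $|B| = 1$, giving $\overline{\delta}(A) = 1$. Functions in $\mathcal{S}(A)$ and $\mathcal{S}^+(A)$ are supported at $0$ alone, hence scalar multiples of the indicator of $\{0\}$; their Fourier transform is a nonnegative constant, so the ratio $f(0)/\hat f(\1)$ equals $1$. For $\mathcal{S}^-(A)$ and $\mathcal{S}^\pm(A)$ I would observe
\[
\hat f(\1) \;=\; f(0) + \sum_{x \ne 0} f(x) \;\le\; f(0),
\]
giving ratio $\ge 1$, and close the sandwich via Theorem \ref{basic}. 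For the converse, if $A \ne \{0\}$ I pick nonzero $a \in A$; standardness supplies $-a \in A$, so $B = \{0,a\}$ has $B - B = \{0, a, -a\} \subset A$, witnessing $\overline{\Delta}(A) \ge 2$ and $\overline{\delta}(A) \le 1/2$.

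\textbf{On obstacles.} The argument is essentially bookkeeping; the main thing to keep in mind is that the extremal chain from Theorem \ref{basic} is what lets me avoid computing each of the five $\lambda$-quantities in isolation. The only mild wrinkle is ensuring $\hat f(\1) > 0$ when forming the ratio in the $\lambda$ definitions, but any admissible $f$ with $\hat f(\1) = 0$ and $f \not\equiv 0$ has $f(0) > 0$ by Fourier inversion and thus yields ratio $+\infty$, which cannot affect the minimum.
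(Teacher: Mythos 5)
Your proof is correct, and since the paper only remarks that the statements are ``immediate consequences of the definitions'' without giving an argument, you are simply filling in the omitted bookkeeping. Your key moves — computing $\delta$ and $\overline\delta$ directly at the extremes, then invoking the chain of Theorem \ref{basic} to pin down the five $\lambda$'s, and using the two-element set $\{0,a\}$ with the symmetry $A=-A$ for the converse bounds — are exactly the intended ones. The explicit Fourier-inversion lower bound and the witness $f\equiv 1$ in part (a), as well as the direct analysis of functions supported on $\{0\}$ in part (b), are redundant once the chain $1/q\le\delta(A)\le\lambda^-(A)\le\cdots\le\lambda^+(A)\le\overline\delta(A)\le 1$ from Theorem \ref{basic} is in hand with $\delta=\overline\delta$ at both extremes, but they are not wrong and do no harm. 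The closing remark about $\hat f(\1)=0$ is a legitimate point of hygiene that the paper does not address, though it is not actually needed here.
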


Both statements are immediate consequences of the definitions.

\section{Invariance properties}\label{secinvariance}

In Definition \ref{deltak} and \ref{lambdak} the ambient group $G$ occurs. A
set $A$ may be a subset of several groups (they being subgroups of a common
group), and the definitions could, in principle, return
different values. We show here that this is not the case, hence our notations $\delta (A)$,
$\lambda (A)$, etc. are justified.

\medskip

To formulate the results rigorously we temporarily extend the notation and write
$\delta (A, G)$, $\lambda (A, G), \dots $,  instead. Also, it will be convenient to introduce the following general notation.

\begin{Def}\label{restriction}
If $X$ is a subset of $Y$, and $f: Y\to \RR$ is a function on $Y$ then $f_X$ denotes the restriction of $f$ to $X$. Conversely, if $g: X\to \RR$ is a function on $X$ then $g^Y$ denotes the extension of $g$ to $Y$ with value 0 outside $X$.
\end{Def}

\begin{Th} \label{ambient}
Let $G$ be a commutative group, $G_1, G_2$ finite subgroups of $G$, and
$A\subset G_1\cap G_2$ a standard set. Let $\varphi $ be any of the functionals $\delta , \overline \delta , \lambda , \lambda^-,
\lambda^+, \lambda^\pm $. We have
\[   \varphi (A, G_1) = \varphi  (A, G_2).  \]
\end{Th}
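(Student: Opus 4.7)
The first step is a reduction. Both $G_1$ and $G_2$ lie inside the finite subgroup $G_1 + G_2 \leq G$, and $A \subset G_1 \cap G_2$ is contained in each, so it suffices to prove $\varphi(A, H) = \varphi(A, G)$ under the single assumption that $H \leq G$ are finite commutative groups with $A \subset H$; applying this with $H = G_1$ and then $H = G_2$ (taking $G \leftarrow G_1 + G_2$) yields the theorem.

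For the combinatorial functionals $\Df$ and $\delta$ the argument is transparent. If $B \subset G$ satisfies $B - B \subset A \subset H$, then $B$ lies in a single $H$-coset, so after translation $B \subset H$; this gives $\Df(A, G) = \Df(A, H)$. For $\delta$, if $(B - B) \cap A = \{0\}$, partition $B$ by $H$-cosets; each piece, shifted into $H$, retains the property, so $\Delta(A, G) \leq [G : H]\,\Delta(A, H)$. The reverse inequality comes from $B = \bigcup_i (x_i + B')$ for $B' \subset H$ extremal and $\{x_i\}$ a transversal: since $A \subset H$, any difference lying in $A$ forces both elements into the same coset. Dividing by $|G| = [G : H] \cdot |H|$ gives $\delta(A, G) = \delta(A, H)$.

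For the four Fourier-analytic functionals I prove both inequalities. In the \emph{easy direction}, given $g$ admissible on $H$, let $f = g^G$ be its zero-extension. A direct computation yields $\hat f(\eta) = \hat g(\eta|_H)$ for every $\eta \in \hat G$, so $\hat f \geq 0$ on $\hat G$ iff $\hat g \geq 0$ on $\hat H$ (the restriction map $\hat G \to \hat H$ being surjective). Sign conditions on the $\mathcal{S}$-class hold trivially since $f$ vanishes off $H \supset A$, and $f(0) = g(0)$, $\hat f(\1_G) = \hat g(\1_H)$, so the ratios match; hence $\lambda(A, G) \leq \lambda(A, H)$, and analogously for $\lambda^-, \lambda^+, \lambda^\pm$.

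In the \emph{reverse direction}, given $f$ admissible on $G$, set $g = f|_H$. A short Fourier-inversion calculation yields
\[ \hat g(\chi) = \frac{|H|}{|G|} \sum_{\eta \in \hat G,\ \eta|_H = \chi} \hat f(\eta), \qquad \chi \in \hat H, \]
so $\hat f \geq 0$ implies $\hat g \geq 0$; the $\mathcal{S}$-class sign conditions descend from $G$ to $H$ automatically because $A \subset H$. The quantitative key is
\[ \hat g(\1_H) - \hat f(\1_G) = -\sum_{x \in G \setminus H} f(x) \geq 0, \]
which uses $G \setminus H \subset G \setminus A$ together with $f \leq 0$ (resp.\ $f = 0$) on $G \setminus A$ in each of the four classes. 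Combined with $g(0) = f(0)$, this gives $g(0)/\hat g(\1_H) \leq f(0)/\hat f(\1_G)$, so $\lambda(A, H) \leq \lambda(A, G)$ and likewise for $\lambda^-, \lambda^+, \lambda^\pm$. The only real obstacle is the bookkeeping: one must verify that the sign constraints on $A$ and on $G \setminus A$ correctly descend under restriction and lift under zero-extension for each of $\mathcal{S}, \mathcal{S}^-, \mathcal{S}^+, \mathcal{S}^\pm$ separately; the Fourier identity itself is a one-line consequence of $\sum_{x \in H}(\chi'\bar\eta)(x) = |H|\cdot[\eta|_H = \chi]$.
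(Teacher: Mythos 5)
Your proof is correct and follows essentially the same route as the paper: reduce to the case $H\leq G$ via the subgroup generated by $G_1$ and $G_2$; handle $\delta$ by a coset-partition argument and $\overline\delta$ by noting that $B-B\subset A$ forces $B$ into a single $H$-coset; handle the $\lambda$'s by zero-extension in one direction and restriction in the other, with the annihilator-group computation $\hat g(\chi) = \frac{|H|}{|G|}\sum_{\eta|_H=\chi}\hat f(\eta)$. One small point worth noting: where you write $\hat g(\1_H) - \hat f(\1_G) = -\sum_{x\in G\setminus H} f(x) \geq 0$, i.e.\ $\hat g(\1_H)\geq\hat f(\1_G)$, the paper's proof states the opposite inequality $\hat h(\1)\leq\hat g(\1)$ for the $\lambda^-$ and $\lambda^\pm$ cases; your direction is the correct one (the restricted function can only lose nonpositive mass), so you have in fact repaired a sign typo in the published argument.
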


\begin{proof}
The claim is obvious for $ \overline \delta$, whose definition does not contain any reference
to $G$. We prove the rest.

\medskip

First we consider the particular case when $G_2=G$. Write $|G_1|=q_1$, $|G|=q$.

\medskip

Consider the case of $\delta$. Let $B, B_1$ be the maximal sets in $G$ and
$G_1$, resp., with the property that
\[ (B-B) \cap A =(B_1- B_1) \cap A = \{0\} . \]
Consider a coset $t+G_1$ of $G_1$. Since the set
$ B_t = (t+G_1) \cap  B $ satisfies $B_t'=B_t-t\subset G_1$ and $(B_t'-B_t')\cap A\subset \{0\}$, we conclude
$|B_t|\leq |B_1|$. Applying this for each coset and summing we obtain $|B|\leq (q/q_1) |B_1|$.
On the other hand, take a representative from each coset, say $t_1, \ldots, t_{q/q_1}$.
The set $ \bigcup (t_i+B_1)$ demonstrates $|B| \geq   (q/q_1) |B_1|$ .

\medskip

Consider now the case when $\varphi $ is any of the functionals $\lambda , \lambda^-,
\lambda^+, \lambda^\pm$. First, if $f: G_1 \to \RR$ is an appropriate function with ${f(0)}/{\hat{f}(0)}=\varphi (A, G_1)$ then it is straightforward to see that $f^G$ has all the required properties to testify that $\varphi (A, G)\leq \varphi (A, G_1)$.

To see the reverse inequality assume that $g: G\to \RR$ is an appropriate function with ${g(0)}/{\hat{g}(\1)}=\varphi (A, G)$,
and consider the restricted function $h=g_{G_1}$. If $\varphi=\lambda$ or $\lambda^+$ then $h$ obviously testifies that $\varphi (A, G_1)\leq \varphi (A, G)$. In the case $\varphi=\lambda^-$ or  $\lambda^{\pm}$ we still have $h(0)=g(0)$ and $\hat{h}(\1)\leq \hat{g}(\1)$, and therefore ${h(0)}/{\hat{h}(\1)}\leq \varphi (A, G)$.
Also, $h$ falls into the class $\iS^- (A,G_1)$ or $\iS^\pm (A,G_1)$. It remains to show that the Fourier coefficients of $h$ are nonnegative.
To see this, let $\gamma\in \hat{G_1}$ and consider all $\psi\in \hat{G}$ such that $\psi_{G_1}=\gamma$. There exist ${q}/{q_1}$ such characters $\psi$. Then
\begin{equation}\label{sgtrick} \begin{aligned}
0\leq \sum_{\psi: \psi_{G_1}=\gamma} \hat{g} (\psi) & =\sum_{\psi} \sum_{x\in G} \psi(x)g(x)=\sum_{\psi}\sum_{x\in G_1}\psi(x)g(x)+ \sum_{\psi}\sum_{x\notin G_1}\psi(x)g(x) \\
& = \frac{q}{q_1}\hat{h}(\gamma)+ \sum_{x\notin G_1}\left( g(x)\sum_{\psi}\psi(x) \right)=\frac{q}{q_1}\hat{h}(\gamma)
\end{aligned} \end{equation}
where we have used that the inner summation in the last sum always returns 0. This shows that $\hat{h}(\gamma)\geq 0$.

\medskip

Finally, in the general case, $G_1, G_2\leq G$, let $H\leq G$ be the subgroup generated by $G_1$ and $G_2$. Then $H$ is also finite, and by the argument above $\varphi (A, G_1) = \varphi (A, H)= \varphi  (A, G_2)$.
\end{proof}

\section{The basic inequality}\label{secbasicineq}

In this section we prove Theorem \ref{basic}. We will only prove $\delta(A)\leq \lambda^-(A)$ and $\lambda^+(A)\leq \overline \delta (A)$, the other inequalities are trivial.

\medskip

To see $\delta(A)\leq \lambda^-(A)$, assume $f\in\iS^- (A)$ is any function such that $\hat{f}\geq 0$, and $B\subset G$ is such that $(B-B)\cap A =\{0\}$. Introduce the function $\hat{B}(\gamma)=\sum_{b\in B}\gamma(b)$, and notice that $|\hat{B}(\gamma)|^2=\sum_{b_1,b_2\in B} \gamma(b_1-b_2)$.
We now evaluate the sum $S=\sum_{\gamma\in \hat{G}} \hat{f}(\gamma)|\hat{B}(\gamma)|^2$. On the one hand, all terms are nonnegative, hence by considering the term $\gamma=\1$ only we get a lower bound $S\geq \hat{f}(\1)|B|^2$.
On the other hand, by exchanging the order of summation and using the Fourier inversion formula we obtain
$$ S=\sum_{\gamma}\sum_{b_1, b_2} \hat{f}(\gamma) \gamma(b_1-b_2)=\sum_{b_1, b_2} \sum_{\gamma} \hat{f}(\gamma) \gamma(b_1-b_2)=q\sum_{b1, b2} f(b_1-b_2).$$
In the last summation all the terms are non-positive by assumption, except when $b_1=b_2$. Hence, $S\leq f(0)|B|$, and comparing the lower and upper bounds $\frac{|B|}{q}\leq \frac{f(0)}{\hat{f}(\1)}$ follows.

To see $\lambda^+(A)\leq \overline \delta (A)$, assume $B\subset G$ is such that $B-B\subset A$. Define the function $f:G\to \RR$ by setting $f(x)$ to be the number of ways $x$ can be written in the form $x=b_1-b_2$ where $b_1, b_2\in B$. In other words, $f=1_B\ast 1_{-B}$. Clearly, $f\in \iS^+ (A)$ and
$$\frac{f(0)}{\hat{f}(0)}=\frac{|B|}{|B|^2}=\frac{1}{|B|}. $$
 Furthermore, $\hat{f}=|\hat{1_B}|^2\geq 0$, so $f$ satisfies each criterion in the definition of $\lambda^+(A)$, and we conclude that
$\lambda^+(A)\leq {1}/{|B|}$.

\newcommand{\B}{\overline B}
\begin{Ex}
  The cases when all our quantities are equal are connected with tilings. Indeed, assume that $\delta(A)=\overline \delta(A)=\delta$, say.
  Take sets $B, \B$ such that
  \begin{align*}
    |B|& = \delta q, & (B-B)& \cap A = \{0\}, \\
 |\B| & = 1/\delta, & (\B-\B) & \subset  A .
  \end{align*}
  The conditions on difference sets imply that all the sums $x+y: x\in B, y\in\B$ are distinct and their number is
$  |B|  |\B| =q$, so $(B, \B)$ is a tiling of $G$. Conversely, any tiling  $(B, \B)$ induces examples of equality as follows.
Take any set $E \subset G \setminus \bigl( (B-B) \cup (\B-\B) \bigr)$. The set
$A=(\B-\B) \cup E $ satisfies $\overline \delta(A) \leq 1/  |\B|$ and $\delta(A) \geq |B-B|/q  = 1/  |\B|$, hence
$\delta(A)=\overline \delta(A) = 1/  |\B|$.
\end{Ex}

\begin{Ex}
  Let $q$ be a prime, $q \equiv 1 \pmod 4 $, $G=\setZ_q$ and let $A$ be the set of quadratic residues. By the familiar propertes of Gaussian
sums one easily shows that $\lambda^-(A) = \lambda^+(A) = 1/\sqrt{q}$ (the case of composite $q$ is more difficult). On the other hand
$\delta(A)< 1/\sqrt{q} < \df(A)$, since the $\delta$'s must be rational. It is natural to conjecture that $\delta$ is much smaller, perhaps
of size $O\bigl( ( \log q)^c \bigr)$, like for a random set (for random sets see Section \ref{secrandom}), but nothing much stronger than
$ 1/\sqrt{q}$ is known.
\end{Ex}

Examples where the  $\lambda$'s are different, as well as examples where the $\delta$'s are very different from the $\lambda$'s, will be given
in Sections \ref{secrandom} and \ref{secdyadic}.

\section{Complements and linear duality}\label{secduality}

\begin{Def}
Two standard sets in a group $G$ are \emph{standard complements}, if
$A\cup A'=G$ and $A\cap A'=\{0\}$.
\end{Def}

The various quantities $\delta$ and $\lambda$ of standard complements are nicely related to each other by the following theorem.

\begin{Th}  \label{duality}
Let $G$ be a finite commutative group, $|G|=q$, and let $A, A'\subset G$ be
standard complements. We have
\begin{equation} \label{dualitas}
\delta (A) \overline \delta (A') = \lambda (A) \lambda (A') = \lambda^-(A) \lambda^+(A') = \lambda^\pm (A) \lambda^\pm (A') =1/q.
\end{equation}
\end{Th}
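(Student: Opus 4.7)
The plan is to combine a Plancherel-type product bound (for the ``$\geq 1/q$'' direction) with linear programming duality (for the ``$\leq 1/q$'' direction), exploiting throughout that standard complements satisfy $A\cap A'=\{0\}$ and $A\cup A'=G$. The first equality $\delta(A)\overline\delta(A')=1/q$ is purely combinatorial: for any $B\subset G$ the condition $(B-B)\cap A=\{0\}$ is equivalent to $B-B\subset A'$, because every nonzero element of $B-B$ must lie either in $A$ or in $A'$ and these two options are ruled out by the respective hypotheses. Hence $\Delta(A)=\overline\Delta(A')$, and dividing by $q$ gives the identity.

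For the three $\lambda$-identities, a single unified Plancherel argument handles the lower bound. Pick admissible $f$ and $g$ from the two partner classes, both with $\hat f,\hat g\ge 0$, and evaluate $\sum_{x\in G}f(x)g(x)$ in two ways. The class-defining sign conditions together with $A\cap A'=\{0\}$ force $f(x)g(x)\le 0$ for every $x\ne 0$ in all three cases (even $f(x)g(x)=0$ for $x\ne 0$ in the $\iS$-$\iS$ case, where the supports of $f$ and $g$ are essentially disjoint), so $\sum_x f(x)g(x)\le f(0)g(0)$. On the other hand, Parseval gives $\sum_x f(x)g(x)=q^{-1}\sum_\gamma \hat f(\gamma)\hat g(\gamma)\ge q^{-1}\hat f(\1)\hat g(\1)$, since every Fourier term is nonnegative. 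Rearranging and taking infima yields $\lambda(A)\lambda(A')$, $\lambda^-(A)\lambda^+(A')$ and $\lambda^\pm(A)\lambda^\pm(A')$ all $\ge 1/q$.

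For the matching upper bounds I would appeal to LP duality. Each $\lambda$-quantity is the optimum of a finite linear program in the Fourier variables $c_\gamma=\hat f(\gamma)\ge 0$, with normalisation $c_\1=1$, class-dependent (in)equality constraints on $f(x)=q^{-1}\sum_\gamma\overline{\gamma(x)}c_\gamma$ for each $x\ne 0$, and objective $q^{-1}\sum_\gamma c_\gamma=f(0)$ to minimise. A routine dualisation repackages the dual multipliers into a single real symmetric function $h:G\to\RR$ with $h(0)=1/q$ (using $A'=-A'$ to symmetrise), and exhibits the dual LP as the problem of maximising $\hat h(\1)$ over $h$ supported on $A'$, obeying the appropriate sign conditions on $A'\setminus\{0\}$, and satisfying $\hat h(\gamma)\ge 0$ for $\gamma\ne\1$. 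Strong LP duality identifies $\lambda(A)$ with this maximum, which, after dividing through by $h(0)=1/q$, reads $\lambda_{\mathrm{partner}}(A')\le 1/(q\lambda(A))$, the desired upper bound.

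The hardest step is keeping track of which class ends up on the $A'$ side. The sign type of each primal constraint---equality, $\le$, or $\ge$---dictates the sign of the corresponding dual multiplier, which transports directly to the sign of $h$ at that point: equality constraints (class $\iS$) give free-sign multipliers and $h\in\iS(A')$, producing the self-pairing $\lambda\leftrightarrow\lambda$; a one-sided constraint on $A'\setminus\{0\}$ (class $\iS^-$) gives fixed-sign multipliers and $h\in\iS^+(A')$, explaining the cross-pairing $\lambda^-\leftrightarrow\lambda^+$; and the two-sided class $\iS^\pm$ pairs with itself by the same accounting. Nonnegativity of $\hat h(\1)$ at the optimum is automatic from $\lambda\ge 1/q>0$ (Theorem~\ref{basic}), and combining the two directions closes each of the three identities.
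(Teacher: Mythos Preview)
Your proposal is correct and follows essentially the same route as the paper: the combinatorial identity $\Delta(A)=\overline\Delta(A')$ is argued directly, the lower bound $\geq 1/q$ comes from multiplying the two test functions and comparing $\sum_x f(x)g(x)$ with its Parseval expansion (the paper phrases this equivalently via $\widehat{fg}(\1)=q^{-1}(\hat f*\hat g)(\1)$), and the upper bound is obtained by LP duality with the same case-by-case bookkeeping of sign types to identify the partner class on $A'$. The only cosmetic difference is that the paper sets up the primal LP in the spatial variables $f(x)$ rather than in the Fourier variables $c_\gamma$, which amounts to swapping the roles of constraints and multipliers.
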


We express this by saying that $\delta $ and $\overline \delta $ are dual quantities, and so are $ \lambda^-$ and $ \lambda^+$,
while $\lambda $ and $ \lambda^\pm $ are self-dual.

\begin{proof}
The relation $\delta (A) \overline \delta (A')=1/q$ is clear from $\overline{\Delta}(A')=\Delta (A)$.

We prove the other three equalities.
 Let $\varphi$ denote one of the functionals $\lambda, \lambda^-, \lambda^{\pm}$ and $\varphi'$ its dual, i.e. $\lambda, \lambda^+, \lambda^{\pm}$, respectively.

First we show the easy inequality ${1/q}\leq \varphi(A)\varphi'(A')$. To this end take any two functions $f_1$ and $f_2$ satisfying the requirements
in the definition of $\varphi(A)$ and $\varphi'(A')$. Consider the function $h=f_1 f_2$. Then $h(0)=f_1(0) f_2(0)$ and
$$\hat{h}(\1)=\frac{1}{q}(\hat f_1\ast \hat f_2)(\1)\geq \frac{1}{q}(\hat f_1 (\1) \hat f_2 (\1)).$$ Also, by the signs of $f_1$ and $f_2$ we see that $h$ is non-positive everywhere except at 0. Therefore $h(0)\geq \hat h(\1)$ which implies
$$f_1(0) f_2(0)\geq \frac{1}{q}(\hat f_1 (\1) \hat f_2 (\1)). $$

To prove the converse inequality we will apply linear duality.
 Let $f$ be any real function on $G$ and consider the values $f(x)$ as variables (as $x$ ranges through $G$). Consider the following systems of inequalities:

\smallskip

\noindent For $\varphi=\lambda$:
\begin{equation}\label{duallambda}
f(x)=0 \ \ {\rm if} \ \  x\notin A, \ \ \sum_{x\in G} f(x)\geq 1,  \ \ \sum_{x\in G}f(x)\gamma(x)\geq 0 \ \ \ {\rm if} \ \ \1\neq\gamma\in \hat{G}
\end{equation}

\noindent For $\varphi=\lambda^-$:
\begin{equation}\label{duallambdaminus}
f(x)\leq 0 \ \ {\rm if} \ \ x\notin A, \ \ \sum_{x\in G} f(x)\geq 1,  \ \ \sum_{x\in G}f(x)\gamma(x)\geq 0 \ \ \ {\rm if} \ \ \1\neq\gamma\in \hat{G}
\end{equation}

\noindent For $\varphi=\lambda^\pm$:
\begin{equation}\label{duallambdaplusminus}
f(x)\leq 0 \ {\rm if} \  x\notin A, \ f(x)\geq 0 \ {\rm if} \ x\in A, \ \sum_{x\in G} f(x)\geq 1, \ \sum_{x\in G}f(x)\gamma(x)\geq 0 \ {\rm if} \ \1\neq\gamma\in \hat{G}
\end{equation}

In each case we know that the inequalities imply $f(0)\geq \varphi(A)$. Therefore, by the principle of linear duality (see e.g. \cite{vanderbei} Theorem 5.2 for a convenient formulation), the inequality $f(0)\geq \varphi(A)$ is the weighted linear combination of the inequalities above, i.e. there exist coefficients $h_1(\1)\geq 0$, $h_1(\gamma)\geq 0$ (for $\gamma\neq \1$), and $h_2(x)$ (with appropriate signs for $x\in A$ and $x\notin A$; see the restrictions below), such that

\begin{equation}\label{dual4} \begin{split}
f(0) & =h_1(\1)\left ( \sum_{x\in G} f(x)\right )+\sum_{\gamma\neq 0}h_1(\gamma)\left ( \sum_{x\in G} f(x)\gamma(x)\right ) + \sum_{x\in G}h_2(x)f(x) \\
 & \geq h_1(\1)=\varphi(A).
\end{split} \end{equation}

The restrictions for $h_2(x)$ are as follows:

\smallskip

\noindent For $\varphi=\lambda$:
\begin{equation}\label{restlambda}
h_2(x)=0 \ \ {\rm if} \ \ x\in A
\end{equation}
For $\varphi=\lambda^-$:
\begin{equation}\label{restlambdaminus}
h_2(x)= 0 \ \ {\rm if} \ \ x\in A,  \ h_2(x)\leq 0 \ \ {\rm if} \ \ x\notin A
\end{equation}
For $\varphi=\lambda^\pm$:
\begin{equation}\label{restlambdaplusminus}
\ h_2(x)\geq 0 \ \ {\rm if} \ \ x\in A,  \ h_2(x)\leq 0 \ \ {\rm if} \ \ x\notin A \end{equation}

From \eqref{dual4} we conclude that $h_1(\1)=\varphi(A)$. Let $g:G\to \RR$ be the function such that $\hat{g}=h_1$. Then $\hat{g}\geq 0$ by definition. Also, $\hat{g}(\1)=\varphi(A)$, and $q g(0)=\sum_{\gamma\in \hat{G}}h_1(\gamma)=1-h_2(0)$, as it is the coefficient of $f(0)$ in \eqref{dual4}.  For any $x\neq 0$, comparing the coefficients of $f(x)$ in \eqref{dual4} we get
$$0=\sum_{\gamma\in \hat{G}}h_1(\gamma)\gamma(x)+h_2(x)=q g(x)+h_2(x), $$
 which implies the following inequalities:

\smallskip

\noindent For $\varphi=\lambda$:
\begin{equation}\label{gxlambda}
g(x)=0 \ \ {\rm if} \ \ x\in A \ (x\neq 0)  \Rightarrow\  g \in \iS(A').
\end{equation}
For $\varphi=\lambda^-$:
\begin{equation}\label{gxlambdaminus}
g(x)= 0 \ \ {\rm if} \ \ x\in A \ (x\neq 0),  \ g(x)\geq 0 \ \ {\rm if} \ \ x\notin A   \Rightarrow\   g \in \iS^+(A').
\end{equation}
For $\varphi=\lambda^\pm$:
\begin{equation}\label{gxlambdaplusminus}
g(x)\leq 0 \ \ {\rm if} \ \ x\in A \ (x\neq 0),  \ g(x)\geq 0 \ \ {\rm if} \ \ x\notin A     \Rightarrow\  g \in \iS^\pm(A').
\end{equation}

Therefore, the function $g$ testifies that
$$\varphi'(A')\leq \frac{1-h_2(0)}{q\varphi(A)}\leq \frac{1}{q\varphi(A)} . $$



\end{proof}

\begin{Rem}
  Perhaps the first application of linear duality to this sort of problem is in a paper by the second author\cite{r84a}; a
good account can be found in Montgomery's book\cite{montgomery94}.
\end{Rem}

\section{Automorphisms}\label{secautomorphisms}

In this section we state some simple but useful properties of the behaviour of our quantities under automorphisms.

\begin{Prop} \label{automorph1}
Let $G$ be a finite commutative group, $\pi$ an automorphism of $G$ and let
$\varphi $ be any of the functionals $\delta , \overline \delta , \lambda , \lambda^-, \lambda^+, \lambda^\pm $. For every $A\subset G$ we have
\[   \varphi (A) = \varphi  (\pi(A)).  \]
\end{Prop}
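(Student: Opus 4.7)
The proof reduces to showing that the natural pushforward of admissible sets (for $\delta, \overline\delta$) and admissible functions (for the $\lambda$-type quantities) under $\pi$ preserves all the relevant data. Since $\pi$ is a bijective group homomorphism, $\pi$ and $\pi^{-1}$ are available on both sides.

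For $\varphi = \delta$ or $\overline\delta$: I would observe that $B \mapsto \pi(B)$ is a cardinality-preserving bijection on the subsets of $G$, and that $\pi(B-B) = \pi(B) - \pi(B)$ because $\pi$ is a homomorphism. Hence $(B-B) \cap A = \{0\}$ is equivalent to $(\pi(B)-\pi(B)) \cap \pi(A) = \{0\}$, and similarly $B-B \subset A$ is equivalent to $\pi(B)-\pi(B) \subset \pi(A)$. This gives $\Delta(A) = \Delta(\pi(A))$ and $\overline\Delta(A) = \overline\Delta(\pi(A))$, whence the equality of $\delta$ and $\overline\delta$.

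For $\varphi \in \{\lambda, \lambda^-, \lambda^+, \lambda^\pm\}$: given $f: G\to\RR$, I set $g = f \circ \pi^{-1}$. Then $g$ vanishes on $G\setminus \pi(A)$ iff $f$ vanishes on $G\setminus A$, and the sign conditions on $A$ versus $G \setminus A$ are transported identically from $A$ to $\pi(A)$; also $g(0) = f(\pi^{-1}(0)) = f(0)$. The key computation is for the Fourier transform:
\[
\hat g(\gamma) = \sum_{x\in G} \gamma(x) f(\pi^{-1}(x)) = \sum_{y\in G} (\gamma\circ \pi)(y) f(y) = \hat f(\gamma\circ\pi).
\]
Since $\gamma \mapsto \gamma\circ\pi$ is a bijection of $\hat G$ onto itself that fixes $\1$, the condition $\hat f \geq 0$ pointwise is equivalent to $\hat g \geq 0$, and $\hat g(\1) = \hat f(\1)$. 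Therefore $f$ testifies the value of $\varphi(A)$ if and only if $g$ testifies the same value for $\varphi(\pi(A))$, proving $\varphi(A) = \varphi(\pi(A))$.

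There is no real obstacle: everything rests on the facts that $\pi$ commutes with the group operation (so difference sets pull through) and that precomposition with $\pi$ is an automorphism of $\hat G$ sending $\1$ to $\1$ (so the Fourier-side conditions are preserved).
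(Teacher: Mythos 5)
Your proof is correct and complete, and it is precisely the argument the authors have in mind -- the paper simply states ``We omit the simple proof.'' The two halves (pushforward of sets for $\delta, \overline\delta$ via $\pi(B)-\pi(B)=\pi(B-B)$, and pushforward of test functions $g=f\circ\pi^{-1}$ for the $\lambda$'s with the identity $\hat g(\gamma)=\hat f(\gamma\circ\pi)$ and the observation that $\gamma\mapsto\gamma\circ\pi$ is a $\1$-fixing bijection of $\hat G$) are exactly what is needed, and nothing is missing.
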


We omit the simple proof. As an application, let $q$ be a prime, $q\equiv1 \pmod 4$, $G= \setZ_q$, and let $A$ be the set of quadratic residues.
The standard complement of $A$ is $A'$, the set of nonresidues. Since the multiplication by a nonresidue is an automorphism that
transforms $A$ into $A'$, we have $ \varphi (A) = \varphi(A')$ for any of the above functionals. On the other hand,
from Theorem \ref{duality} we know that $\lambda(A)\lambda(A')=  \lambda^\pm(A) \lambda^\pm(A')=1/q$, so we immediately get that
$\lambda(A)=  \lambda^\pm(A) =1/\sqrt{q}$. While this fact, and also the values of $\lambda^+(A)$ and $\lambda^-(A)$ are easily found directly using
Gaussian sums, it is somewhat surprising that we can find them without resorting to any real number theory. Unfortunately this
argument does not work for composite moduli or higher powers.

\begin{Prop} \label{automorph2}
Let $G$ be a finite commutative group, $A\subset G$, and let $\Pi$ be the set of those automorphisms that leave $A$ fixed (as a set,
not necessarily pointwise). Let $\varphi $ be any of the functionals $ \lambda , \lambda^-, \lambda^+, \lambda^\pm $, and let $\mathcal T$ be the corresponding
class of functions (one of $\mathcal S(A), \mathcal S^-(A), \mathcal S^+(A)$ or $ \mathcal S^\pm(A)$, restricted to functions with
nonnegative Fourier transform).
There is an $f\in\mathcal T$ such that
$\varphi(A)= f(0)/\hat f(\1)$ which is invariant under $\Pi$, that is, $f=f \circ\pi$ for all $\pi\in\Pi$.
\end{Prop}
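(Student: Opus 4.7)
The plan is a standard symmetrization (Reynolds operator) argument. First I would verify that the infimum in the definition of $\varphi(A)$ is attained: after normalizing $\hat f(\1)=1$, the condition $\hat f\ge 0$ together with Fourier inversion gives $\|f\|_\infty = f(0) = \frac{1}{q}\sum_\gamma \hat f(\gamma) \ge 1/q$, so the admissible set is closed and bounded in $\RR^G$, hence compact, and the continuous objective $f(0)$ attains its minimum. Pick any minimizer $f_0\in\mathcal T$.

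Now define the $\Pi$-average
\[
\tilde f = \frac{1}{|\Pi|} \sum_{\pi\in\Pi} f_0\circ\pi,
\]
which is manifestly $\Pi$-invariant, since any $\sigma\in\Pi$ merely permutes the summands. I would then check, one by one, that $\tilde f$ satisfies all the properties required for membership in $\mathcal T$ and that the ratio $\tilde f(0)/\hat{\tilde f}(\1)$ is unchanged.

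The verifications are routine and each relies on a single observation. Every $\pi\in\Pi$ is a group automorphism, so $\pi(0)=0$, hence $(f_0\circ\pi)(0)=f_0(0)$ and therefore $\tilde f(0)=f_0(0)$. Since $\pi(A)=A$ (and consequently $\pi(G\setminus A)=G\setminus A$), any sign prescription on $f_0|_A$ or $f_0|_{G\setminus A}$ is inherited by each $f_0\circ\pi$ and survives the convex averaging; in particular $\tilde f$ lies in the same class $\iS(A), \iS^-(A), \iS^+(A)$ or $\iS^\pm(A)$ as $f_0$. The change of variable $y=\pi(x)$ yields $\widehat{f_0\circ\pi}(\gamma)=\hat f_0(\gamma\circ\pi^{-1})$, which is nonnegative (as $\hat f_0\ge 0$) and equals $\hat f_0(\1)$ when $\gamma=\1$; averaging, $\hat{\tilde f}\ge 0$ and $\hat{\tilde f}(\1)=\hat f_0(\1)$. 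Finally $\tilde f\not\equiv 0$ since $\tilde f(0)=f_0(0)\ge \hat f_0(\1)/q>0$.

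Combining these facts, $\tilde f\in\mathcal T$ with $\tilde f(0)/\hat{\tilde f}(\1)=f_0(0)/\hat f_0(\1)=\varphi(A)$, so $\tilde f$ is the desired invariant optimizer. There is no real obstacle here; the one step that genuinely uses the hypothesis $\pi\in\Pi$ (namely $\pi(A)=A$) is precisely the stability of the class $\mathcal T$ under precomposition with elements of $\Pi$, and everything else is formal.
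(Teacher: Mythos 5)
Your proof is correct and follows essentially the same symmetrization argument as the paper: take an extremal $f_0$ and average over $\Pi$ (the paper omits the $1/|\Pi|$ factor, which is immaterial since the ratio $f(0)/\hat f(\1)$ is scale-invariant). You supply more detail than the paper, in particular the compactness argument justifying that the infimum is attained, which the paper takes for granted.
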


\begin{proof}
  Indeed, take any $f_0\in\mathcal T$ for which $\varphi(A)= f_0(0)/\hat f_0(\1)$    and form
   \[ f(x) = \sum_{\pi\in\Pi} f(\pi(x)) .\]
\end{proof}

For sets that have lots of automorphisms, like power residues, this restricts the class of functions to be considered for
finding the value of $\lambda$, etc.

\section{Union and intersection}\label{secunion}

In this section we consider the behaviour of the various $\delta$ and $\lambda$ quantities under intersection and union of standard sets.

\begin{Th}  \label{deltametszet}
Let $G$ be a finite commutative group, $|G|=q$, and let $A_1, A_2\subset G$ be
standard sets.  We have
\begin{equation} \label{dm}
\overline \delta (A_1 \cap A_2) \leq q \overline \delta (A_1)\overline \delta( A_2).     \end{equation}
\end{Th}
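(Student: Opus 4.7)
The plan is to unwind the definition $\overline{\delta}(A)=1/\overline{\Delta}(A)$ and prove the equivalent statement
\[
\overline{\Delta}(A_1)\,\overline{\Delta}(A_2) \leq q\,\overline{\Delta}(A_1 \cap A_2).
\]
So I would fix extremal witnesses $B_1, B_2 \subset G$ with $B_i - B_i \subset A_i$ and $|B_i|=\overline{\Delta}(A_i)$, and construct from them a single set $B$ with $B-B \subset A_1 \cap A_2$ whose size is at least $|B_1||B_2|/q$.

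The construction is the obvious translate-and-intersect one: for each $t\in G$, let $B(t) = B_1 \cap (t + B_2)$. If $x, y \in B(t)$ then $x-y \in B_1 - B_1 \subset A_1$; also $x-t,\,y-t \in B_2$, hence $x-y \in B_2 - B_2 \subset A_2$. Thus $B(t)-B(t) \subset A_1 \cap A_2$ for every $t$, regardless of how $t$ is chosen. It remains to pick $t$ making $|B(t)|$ large.

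For this I would use the standard averaging identity
\[
\sum_{t\in G} |B_1 \cap (t+B_2)| \;=\; \sum_{t\in G}\sum_{x\in G} \mathbf{1}_{B_1}(x)\mathbf{1}_{B_2}(x-t) \;=\; |B_1|\,|B_2|,
\]
which forces some $t_0 \in G$ to satisfy $|B(t_0)| \geq |B_1||B_2|/q$. Taking $B=B(t_0)$ yields a witness for $\overline{\Delta}(A_1 \cap A_2)$, giving $\overline{\Delta}(A_1\cap A_2) \geq \overline{\Delta}(A_1)\overline{\Delta}(A_2)/q$, which is exactly \eqref{dm} after taking reciprocals.

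There is no real obstacle here: the only thing to check carefully is that $B(t)$ is legitimately a witness set for the intersection (the translation by $t$ cancels inside the difference set, so symmetry and membership of $0$ in $A_1 \cap A_2$ — guaranteed because both $A_i$ are standard — are not issues). The argument is just pigeonholing over the translate variable $t$, and the factor $q$ in the bound is precisely the price paid by this averaging.
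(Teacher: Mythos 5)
Your argument is the same as the paper's: translate one witness, intersect with the other, and average over the translate $t$ to find a choice whose intersection has size at least $|B_1||B_2|/q$. The paper writes $B_1 \cap (t-B_2)$ where you write $B_1 \cap (t+B_2)$, but this is immaterial since $A_2 = -A_2$ makes $-B_2$ an equally good witness; otherwise the two proofs coincide.
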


\begin{proof}
Take sets $B_i$ such that $B_i-B_i\subset A_i$, $i=1,2$. Any set of the form
$B=B_1\cap(t-B_2)$ satisfies $B-B\subset A_1\cap A_2$, and an obvious averaging argument shows that there exists a $t$ such that
$|B| \geq |B_1| |B_2|/q$.
\end{proof}

\begin{Th}  \label{deltaunio}
Let $G$ be a finite commutative group, $|G|=q$, and let $A_1, A_2\subset G$ be
standard sets.  We have
\begin{equation} \label{du}
\delta (A_1 \cup A_2) \geq  \delta (A_1)\delta( A_2),     \end{equation}
\end{Th}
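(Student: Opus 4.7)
The plan is to mimic the averaging argument already used for Theorem \ref{deltametszet}, but with the roles of the sets inverted so that the resulting $B$ sits inside both an $A_1$-difference-avoiding set and an $A_2$-difference-avoiding set simultaneously.

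First I would take extremal sets $B_1, B_2 \subset G$ realizing $\delta(A_i)$, that is, $|B_i| = \Delta(A_i) = \delta(A_i) q$ and $(B_i - B_i) \cap A_i = \{0\}$ for $i = 1, 2$. The key observation is that for any $t \in G$, the set
\[
   B = B_1 \cap (t + B_2)
\]
satisfies $B - B \subset (B_1 - B_1) \cap (B_2 - B_2)$, and each factor already meets $A_i$ only at $0$, so $(B - B) \cap (A_1 \cup A_2) = \{0\}$. Therefore every such $B$ is admissible for $\Delta(A_1 \cup A_2)$.

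Next I would run the standard averaging over translates: as $t$ varies uniformly over $G$,
\[
   \mathbf{E}_t\, |B_1 \cap (t + B_2)| = \sum_{b_1 \in B_1,\, b_2 \in B_2} \Pr[t = b_1 - b_2] = \frac{|B_1||B_2|}{q} = \delta(A_1)\delta(A_2)\, q.
\]
Hence some particular $t$ yields $|B| \geq \delta(A_1)\delta(A_2)\, q$, which gives $\Delta(A_1 \cup A_2) \geq \delta(A_1)\delta(A_2)\, q$ and thus \eqref{du} after dividing by $q$.

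There is essentially no obstacle here, the whole argument is a one-line averaging; the only thing worth being careful about is the direction of the containment $(B-B) \cap (A_1 \cup A_2) = \{0\}$, which is why we need $(B-B)$ to lie in the intersection of $B_i - B_i$ rather than their sum (so $t + B_2$, not $B_1 + B_2$, is the right object). In this sense the argument is genuinely dual to that of Theorem \ref{deltametszet}: intersection of sets of type $\overline\Delta$ produces a product inequality, while union of sets of type $\Delta$ does the same via the same trick applied to the $B_i$'s directly.
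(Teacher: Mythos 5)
Your proof is correct, but it takes a genuinely different route from the paper's. The paper proves \eqref{du} by duality: it uses the relation $\delta(A)\,\overline\delta(A')=1/q$ from Theorem~\ref{duality} together with the identity $(A_1\cup A_2)'=A_1'\cap A_2'$ for standard complements, and then simply applies Theorem~\ref{deltametszet} to $A_1'$ and $A_2'$ and rearranges. You instead give a direct averaging argument that is the exact mirror of the paper's proof of Theorem~\ref{deltametszet}: starting from extremal $B_1,B_2$ with $(B_i-B_i)\cap A_i=\{0\}$, the translate intersection $B=B_1\cap(t+B_2)$ satisfies $B-B\subset(B_1-B_1)\cap(B_2-B_2)$, hence $(B-B)\cap(A_1\cup A_2)=\{0\}$, and averaging over $t$ produces a $t$ with $|B|\ge|B_1||B_2|/q$. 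Both arguments are sound and essentially one line. Your version has the small advantage of being self-contained, not relying on Theorem~\ref{duality} or on the complement bookkeeping; the paper's version is the natural choice once the duality apparatus of Section~\ref{secduality} is in place, since it exposes $\delta$ and $\overline\delta$ as dual quantities and reuses the already-proved intersection bound. One cosmetic point: when $\delta(A_1)\delta(A_2)q<1$ the averaging gives nothing, but then the bound is trivial since $\Delta(A_1\cup A_2)\ge 1$ always (the singleton $\{0\}$ is admissible), so there is no real gap.
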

\begin{proof}
Using the duality $\delta (A) \overline \delta (A')=1/q$ the statement follows from the previous result applied to the standard complements of $A_1$ and $A_2$.
\end{proof}

\begin{Th}  \label{lambdametszet}
Let $G$ be a finite commutative group, $|G|=q$, and let $A_1, A_2\subset G$ be
standard sets.  We have
\begin{equation} \label{lm}
\lambda (A_1 \cap A_2) \leq q  \lambda (A_1)\lambda( A_2),
\end{equation}
\begin{equation} \label{lmp}
\lambda^+ (A_1 \cap A_2)  \leq q  \lambda^+ (A_1)\lambda^+( A_2),
\end{equation}
\begin{equation} \label{lmm}
\lambda^- (A_1 \cap A_2)  \leq q   \lambda^- (A_1)\lambda^+( A_2),
\end{equation}
\begin{equation} \label{lmpm}
\lambda^\pm (A_1 \cap A_2)  \leq q   \lambda^\pm (A_1)\lambda^+( A_2).
\end{equation}
\end{Th}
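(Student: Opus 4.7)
The plan is to adapt the product construction used in the easy direction of Theorem \ref{duality} in a uniform way: given (near-)optimal test functions $f_1, f_2$ for $A_1, A_2$ in the classes relevant to each inequality, form $f = f_1 f_2$ and verify that it testifies the desired bound for $A_1 \cap A_2$. The Fourier-analytic input is the identity $\widehat{f_1 f_2}(\gamma) = q^{-1}(\hat f_1 \ast \hat f_2)(\gamma)$. Combined with the pointwise nonnegativity of $\hat f_1$ and $\hat f_2$ this gives $\widehat{f_1 f_2} \geq 0$ on $\hat G$, and at $\gamma = \1$ one can retain only the term $\psi = \1$ in the convolution:
\[
\widehat{f_1 f_2}(\1) \;=\; \frac{1}{q}\sum_\psi \hat f_1(\psi)\hat f_2(\psi^{-1}) \;\geq\; \frac{1}{q}\hat f_1(\1)\hat f_2(\1).
\]
Together with $f(0) = f_1(0) f_2(0)$ this yields
\[
\frac{f(0)}{\hat f(\1)} \;\leq\; q \cdot \frac{f_1(0)}{\hat f_1(\1)} \cdot \frac{f_2(0)}{\hat f_2(\1)},
\]
which is precisely the factor $q$ appearing in all four inequalities.

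Next I would verify, case by case, that $f = f_1 f_2$ lies in the class associated with $A_1 \cap A_2$. For \eqref{lm} take $f_i \in \iS(A_i)$; both vanish off $A_i$, so $f$ vanishes off $A_1 \cap A_2$. For \eqref{lmp} take $f_i \in \iS^+(A_i)$; then $f$ vanishes off $A_1 \cap A_2$ and is nonnegative on it. For \eqref{lmm} and \eqref{lmpm} choose $f_1 \in \iS^-(A_1)$ (resp.\ $\iS^\pm(A_1)$) and, crucially, $f_2 \in \iS^+(A_2)$. Then $f \equiv 0$ on $G \setminus A_2$; on $A_2 \setminus A_1$ one has $f_1 \leq 0$ and $f_2 \geq 0$, so $f \leq 0$; on $A_1 \cap A_2$ the sign of $f$ coincides with that of $f_1$ (since $f_2 \geq 0$), which is unrestricted in the $\iS^-$ case and nonnegative in the $\iS^\pm$ case. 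Hence $f \in \iS^-(A_1 \cap A_2)$ or $f \in \iS^\pm(A_1 \cap A_2)$ as required. Nondegeneracy $f \not\equiv 0$ follows from $f(0) = f_1(0) f_2(0) > 0$, using that the $\lambda$-quantities are bounded below by $1/q > 0$ (Theorem \ref{basic}), which forces $f_i(0) > 0$ at an optimum.

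The one slightly subtle point — and the explanation for the asymmetric form of \eqref{lmm} and \eqref{lmpm}, which always pair the restricted class of $A_1$ with $\lambda^+(A_2)$ — is that drawing both factors from $\iS^-$ or $\iS^\pm$ would fail: on $G \setminus (A_1 \cup A_2)$ both $f_1$ and $f_2$ would be $\leq 0$, whence $f = f_1 f_2 \geq 0$ there, violating the sign constraint required off $A_1 \cap A_2$. Taking $f_2 \in \iS^+(A_2)$ (so $f_2$ vanishes outside $A_2$) makes $f$ vanish wherever its sign is otherwise uncontrolled, and this is the only real obstacle the argument must navigate; once it is arranged, the rest is routine sign bookkeeping combined with the Fourier identity above.
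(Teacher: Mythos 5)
Your proposal matches the paper's proof essentially line for line: form $h=f_1f_2$, use $\hat h=(\hat f_1\ast\hat f_2)/q$ together with $\hat f_i\geq 0$ to get $\hat h\geq 0$ and $\hat h(\1)\geq \hat f_1(\1)\hat f_2(\1)/q$, and verify the class membership of $h$ by the same case analysis (including pairing $\iS^-(A_1)$ and $\iS^\pm(A_1)$ with $\iS^+(A_2)$). Your extra remarks on why the asymmetric pairing is forced and on nondegeneracy are correct but not new ideas; this is the paper's argument.
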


\begin{proof}
Let $f_1, f_2$ be functions, belonging to some of the $\iS$-classes of the sets $A_1, A_2$. Their product $h = f_1 f_2$ belongs
to an  $\iS$-class of the intersection as follows:
\begin{alignat*}{3}
  f_1 & \in \iS(A_1),\  & f_2 &\in \iS(A_2) &\ \Rightarrow\  & h \in \iS(A_1 \cap A_2), \\
  f_1 & \in \iS^+(A_1),\  & f_2 &\in \iS^+(A_2) &\ \Rightarrow\  & h \in \iS^+(A_1 \cap A_2), \\
  f_1 & \in \iS^-(A_1),\  & f_2 &\in \iS^+(A_2) &\ \Rightarrow\  & h \in \iS^-(A_1 \cap A_2), \\
  f_1 & \in \iS^\pm (A_1),\  & f_2 &\in \iS^+(A_2) &\ \Rightarrow\  & h \in \iS^\pm (A_1 \cap A_2).
\end{alignat*}

Clearly   $h(0)=f_1(0)f_2(0)$.  Furthermore we have $\hat h=(\hat f_1 \ast \hat f_2)/q$,
which shows that $\hat h\geq 0 $ and
 $\hat h(\1)\geq \hat f_1(\1) \hat f_2(\1)/q$, and we conclude
$$\frac{h(0)}{\hat h(\1)}\leq q \frac{f_1(0)}{\hat f_1(\1)}\frac{f_2(0)}{\hat f_2(\1)}. $$
By taking the minimum over all admissible $f_1, f_2$ we get the inequalities of the theorem.
\end{proof}

\begin{Th}  \label{lambdaunio}
Let $G$ be a finite commutative group, $|G|=q$, and let $A_1, A_2\subset G$ be
standard sets.  We have
\begin{equation} \label{lu}
\lambda (A_1 \cup A_2) \geq  \lambda (A_1)\lambda( A_2),
\end{equation}
\begin{equation} \label{lup}
\lambda^+ (A_1 \cup A_2) \geq  \lambda^+ (A_1)\lambda^-( A_2),
\end{equation}
\begin{equation} \label{lum}
\lambda^- (A_1 \cup A_2) \geq  \lambda^- (A_1)\lambda^-( A_2),
\end{equation}
\begin{equation} \label{lupm}
\lambda^\pm (A_1 \cup A_2) \geq  \lambda^\pm (A_1)\lambda^-( A_2).
\end{equation}
\end{Th}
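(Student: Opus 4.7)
The plan is to deduce Theorem \ref{lambdaunio} from Theorem \ref{lambdametszet} by invoking the linear duality established in Theorem \ref{duality}. Indeed, each of the four inequalities asserted here is the exact dual of the corresponding inequality for intersections proved just above, so no new Fourier-analytic work should be needed; everything reduces to algebraic manipulation of the identities $\lambda(A)\lambda(A')=\lambda^-(A)\lambda^+(A')=\lambda^\pm(A)\lambda^\pm(A')=1/q$.

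The key preliminary observation is that if $A_1'$ and $A_2'$ denote the standard complements of the standard sets $A_1$ and $A_2$, then $A_1'\cap A_2'$ is the standard complement of $A_1\cup A_2$. This is immediate from the set-theoretic identity $(G\setminus A_1)\cap(G\setminus A_2)=G\setminus(A_1\cup A_2)$ after adjoining $0$; the resulting set is symmetric and contains $0$, so it is standard.

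Now I would deduce each of the four inequalities in turn. For \eqref{lu}, duality gives $\lambda(A_1\cup A_2)=1/\bigl(q\,\lambda(A_1'\cap A_2')\bigr)$, then \eqref{lm} applied to $A_1'$ and $A_2'$ yields $\lambda(A_1'\cap A_2')\leq q\,\lambda(A_1')\lambda(A_2')$, and finally $\lambda(A_i')=1/\bigl(q\,\lambda(A_i)\bigr)$ collapses the double factor of $q$ and produces $\lambda(A_1\cup A_2)\geq\lambda(A_1)\lambda(A_2)$. For \eqref{lup} one pairs $\lambda^+(A_1\cup A_2)=1/\bigl(q\,\lambda^-(A_1'\cap A_2')\bigr)$ with \eqref{lmm}, namely $\lambda^-(A_1'\cap A_2')\leq q\,\lambda^-(A_1')\lambda^+(A_2')$, and then uses $\lambda^-(A_1')=1/(q\lambda^+(A_1))$ and $\lambda^+(A_2')=1/(q\lambda^-(A_2))$ to obtain the stated bound. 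Inequality \eqref{lum} is analogous: duality converts $\lambda^-$ into $\lambda^+$ on the complement, \eqref{lmp} handles the intersection $\lambda^+(A_1'\cap A_2')\leq q\,\lambda^+(A_1')\lambda^+(A_2')$, and the dual identity $\lambda^+(A_i')=1/(q\lambda^-(A_i))$ closes the calculation. Finally, \eqref{lupm} follows from \eqref{lmpm}, using $\lambda^\pm(A_i')=1/(q\lambda^\pm(A_i))$ and $\lambda^+(A_2')=1/(q\lambda^-(A_2))$, which explains the asymmetry of the right-hand side (a $\lambda^-$ rather than a $\lambda^\pm$).

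I expect no real obstacle beyond bookkeeping: the only thing that could go wrong is a mismatched pairing in the duality dictionary, so the main care is to apply the correct line of Theorem \ref{lambdametszet} to match the exponents on the dual side, and to check that the asymmetric $\lambda^\pm$--$\lambda^-$ and $\lambda^+$--$\lambda^-$ right-hand sides are exactly what the duality demands (and not, say, $\lambda^\pm$--$\lambda^\pm$).
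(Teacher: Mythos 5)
Your proposal is correct and is exactly the argument the paper gives: reduce each union inequality to the corresponding intersection inequality of Theorem~\ref{lambdametszet} applied to the standard complements $A_1', A_2'$, using the duality relations $\lambda(A)\lambda(A')=\lambda^-(A)\lambda^+(A')=\lambda^\pm(A)\lambda^\pm(A')=1/q$ of Theorem~\ref{duality} together with $(A_1\cup A_2)'=A_1'\cap A_2'$. The bookkeeping you carried out (in particular pairing \eqref{lup} with \eqref{lmm}, \eqref{lum} with \eqref{lmp}, and \eqref{lupm} with \eqref{lmpm}) matches the paper's worked example.
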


\begin{proof}
Using the duality relations these statements are easily seen to be equivalent to the statements of the previous theorem applied to the standard complements of $A_1$ and $A_2$. For example, in the case of \eqref{lup} the calculation runs as follows:
\begin{equation*}
\frac{1/q}{\lambda^+(A_1\cup A_2)}=\lambda^-(A_1'\cap A_2')\leq q \lambda^-(A_1') \lambda^+(A_2')=q \frac{1/q}{\lambda^+(A_1)}\frac{1/q}{\lambda^-(A_2)}
\end{equation*}
\end{proof}

Most of the above functionals satisfy a trivial monotonicity property. Let $\varphi $ be any of the functionals $\delta , \overline \delta , \lambda , \lambda^-, \lambda^+ $.
\begin{equation}\label{mon}
\rm{If} \ A_1\subset A_2 \ \rm{then} \ \varphi(A_2)\leq \varphi (A_1).
\end{equation}

This observation can be applied to complement the upper estimates for intersection by the lower estimate
 \[ \varphi(A_1 \cap A_2) \geq \max \bigl( \varphi(A_1), \varphi( A_2) \bigr),  \]
and the lower estimates for union by the upper estimate
 \[ \varphi(A_1 \cup A_2) \leq \min \bigl( \varphi(A_1), \varphi( A_2) \bigr).  \]
Equality holds when $A_1=A_2$, so in general nothing stronger can be asserted.

We will see in Example \ref{exlambdapm} that  inequality \eqref{mon} may fail for $ \lambda^\pm $.

\begin{?}
Find a nontrivial lower estimate for $\lambda^\pm (A_1 \cap A_2)$ and a  nontrivial upper estimate for $\lambda^\pm (A_1 \cup A_2)$.
\end{?}

\section{Subgroups and factor groups}\label{secfactor}

Let $G$ be a commutative group and $H$ a subgroup. We use $G/H$ to
denote the factor group, and we use the cosets of $H$ to represent its
elements. We also introduce the following natural notions.

\begin{Def}
For any set $A\subset G$ we write  $A/H = \{H+a: a\in A\}$
to denote the collection of cosets that intersect $A$ (= the image of $A$
under the canonical homomorphism from $G$ to $G/H$). For any function $f: G\to \RR$ we introduce the factorization of $f$ by $H$ as the function
$f_{/H}$ on $G/H$ defined by $f_{/H}(x+H)=\sum_{t\in H} f(x+t)$. Conversely, for a function $g: G/H \to \RR$ we introduce the lifting $g^{\times H}$ of $g$ to the group $G$ as $g^{\times H}(x)=g(x+H)$.
\end{Def}

\medskip

The following is essentially a result of Kolountzakis and R\'ev\'esz \cite{KR}.

\begin{Th}  \label{factor}
Let $G$ be a finite commutative group, $H$ a subgroup, $G_1=G/H$.
Let $A\subset G$ be a standard set, and put $A_H=A\cap H\subset H$, $A_1=A/H\subset G_1$. We have
\begin{equation}\label{factd}
\delta (A) \geq  \delta (A_H) \delta (A_1),
\end{equation}
\begin{equation}  \label{factoverd}
\overline \delta (A) \geq  \overline \delta (A_H) \overline \delta (A_1),
\end{equation}
\begin{equation}  \label{factl}
\lambda (A) \geq  \lambda (A_H) \lambda (A_1),
\end{equation}
\begin{equation}  \label{factlp}
\lambda^+(A) \geq  \lambda^+(A_H) \lambda^+(A_1),
\end{equation}
\begin{equation}  \label{factlm}
\lambda^-(A) \geq  \lambda^-(A_H) \lambda^-(A_1),
\end{equation}
\begin{equation}  \label{factlpm}
\lambda^\pm (A) \geq  \lambda^\pm (A_H) \lambda^-(A_1).
\end{equation}
\end{Th}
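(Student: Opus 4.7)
The plan is to establish \eqref{factd} and \eqref{factoverd} by direct set-theoretic constructions, and the four $\lambda$-inequalities by a single restriction/factorization argument on admissible functions. For \eqref{factd}, I would take extremal $B_H\subset H$ and $B_1\subset G_1$ witnessing $\Delta(A_H)$ and $\Delta(A_1)$, choose any representatives $x_c\in c$ for $c\in B_1$ (with $x_{0_{G_1}}=0$), and form the Cartesian-product-type set $B=\bigcup_{c\in B_1}(x_c+B_H)$. A difference in $(B-B)\cap A$ projects to an element of $(B_1-B_1)\cap A_1=\{0_{G_1}\}$ (forcing coset-equality), after which the remaining difference lies in $(B_H-B_H)\cap A_H=\{0\}$; hence $\Delta(A)\ge\Delta(A_H)\Delta(A_1)$, which yields \eqref{factd} after dividing by $q=|H|\cdot|G_1|$. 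Dually, for \eqref{factoverd} I would take any $B\subset G$ with $B-B\subset A$ and slice by cosets of $H$: each $B\cap c$ has difference set in $A\cap H=A_H$, so $|B\cap c|\le\overline\Delta(A_H)$, and the projection of $B$ to $G_1$ has difference set in $A_1$, so it covers at most $\overline\Delta(A_1)$ cosets. Summing gives $|B|\le\overline\Delta(A_H)\overline\Delta(A_1)$.

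For the four $\lambda$-inequalities, fix an $f:G\to\RR$ admissible for the relevant $\varphi\in\{\lambda,\lambda^+,\lambda^-,\lambda^\pm\}$ at $A$ (in particular $\hat f\ge 0$ and $\hat f(\1)>0$), and consider both the restriction $f_H$ on $H$ and the factorization $f_{/H}$ on $G_1$. The two Fourier identities I would use are
\[
\widehat{f_{/H}}(\beta)=\hat f(\gamma_\beta)\qquad\text{and}\qquad\widehat{f_H}(\alpha)=\frac{1}{|G_1|}\sum_{\psi\in\hat G,\,\psi|_H=\alpha}\hat f(\psi),
\]
where $\gamma_\beta\in H^\perp$ denotes the lift of $\beta\in\widehat{G_1}$; the first is a direct computation, the second is exactly the averaging argument of \eqref{sgtrick}. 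Both give $\widehat{f_H}\ge 0$ and $\widehat{f_{/H}}\ge 0$. Combined with the evaluation identities $f_H(0)=f(0)$, $\widehat{f_H}(\1_{\hat H})=\sum_{x\in H}f(x)=f_{/H}(0_{G_1})$, and $\widehat{f_{/H}}(\1_{\widehat{G_1}})=\hat f(\1_{\hat G})$, the definitions of $\varphi(A_H)$ and of a suitable companion functional $\varphi_\ast(A_1)$ chain telescopically into
\[
\frac{f(0)}{\hat f(\1)}=\frac{f_H(0)}{\widehat{f_H}(\1_{\hat H})}\cdot\frac{f_{/H}(0_{G_1})}{\widehat{f_{/H}}(\1_{\widehat{G_1}})}\ge\varphi(A_H)\,\varphi_\ast(A_1),
\]
and taking the infimum over admissible $f$ yields $\varphi(A)\ge\varphi(A_H)\varphi_\ast(A_1)$.

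The sign bookkeeping determines $\varphi_\ast$ in each case. For $\varphi\in\{\lambda,\lambda^+,\lambda^-\}$, the restriction $f_H$ inherits the sign conditions at $A_H$ versus $H\setminus A_H\subset G\setminus A$ tautologically; and for the factorization, any coset $c\notin A_1$ lies entirely in $G\setminus A$, so $f_{/H}(c)$ inherits the sign of $f$ on $G\setminus A$, while any $c\in A_1$ sums values all of the correct sign (or is sign-unrestricted). This gives $\varphi_\ast=\varphi$ in all three cases. The case $\varphi=\lambda^\pm$ is the crux and forces the asymmetric right-hand side of \eqref{factlpm}: $f_H$ still lies in $\iS^\pm(A_H)$, but for $c\in A_1$ the sum $f_{/H}(c)$ mixes the nonnegative values at $c\cap A$ with the nonpositive values at $c\setminus A$ and is of indeterminate sign, whereas nonpositivity off $A_1$ is preserved; hence we can only claim $f_{/H}\in\iS^-(A_1)$, and $\varphi_\ast=\lambda^-$. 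The non-degeneracy conditions $f_H\not\equiv 0$ and $f_{/H}\not\equiv 0$ needed to invoke the definitions at $A_H$ and $A_1$ follow respectively from $f(0)\ge\hat f(\1)/q>0$ (Fourier inversion) and from $\widehat{f_{/H}}(\1)=\hat f(\1)>0$. The main obstacle I foresee is precisely this sign analysis for $\lambda^\pm$; the Fourier-analytic content is routine once \eqref{sgtrick} is available.
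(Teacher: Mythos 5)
Your proposal matches the paper's proof essentially step for step: the representative construction $B=\bigcup_c(x_c+B_H)$ for \eqref{factd}, the coset-slicing count for \eqref{factoverd}, and for the $\lambda$'s the same use of the restriction $f_H$ and the push-forward $f_{/H}$ with nonnegativity of $\widehat{f_H}$ via the averaging identity \eqref{sgtrick}, nonnegativity of $\widehat{f_{/H}}$ via lifting characters, and the telescoping of \eqref{e1} and \eqref{e2}, including the same sign analysis showing $f_{/H}\in\iS^-(A_1)$ in the $\lambda^\pm$ case. Your explicit verification of the non-degeneracy conditions is a small tidying-up the paper omits; the only remaining detail you might add is that $\widehat{f_H}(\1)=\frac{1}{|G_1|}\sum_{\psi\in H^\perp}\hat f(\psi)\geq\hat f(\1)/|G_1|>0$, so the middle ratio in your telescope is genuinely well defined.
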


\begin{proof}
To see \eqref{factd} let $B_H$ be a set such that $B_H\subset H$ and $(B_H-B_H)\cap A_H=\{0\}$, and let $B_1\subset G_1$ be a set such that $(B_1-B_1) \cap A_1=\{0\}$. The elements of $B_1$ are cosets of $H$. Take a representative $x_i\in G$ from each such coset, and consider the set $B=\cup_i (x_i+B_H)\subset G$. It is clear that $|B|=|B_H||B_1|$ and $(B-B)\cap A =\{0\}$.

\medskip

Inequality \eqref{factoverd} is equivalent to $\overline \Delta (A)\leq \overline \Delta (A_H) \overline \Delta (A_1)$. Take a set $B\in G$ such that $B-B \subset A$. In each coset $x+H$ there can be at most $\overline \Delta (A_H)$ elements of $B$. Also, the number of cosets that contain some elements of $B$ is at most $\overline \Delta (A_1)$. Therefore, $|B|\leq \overline \Delta (A_H) \overline \Delta (A_1)$.

We will prove the remaining four inequalities. Let $f: G\to \RR$ be any function and consider the functions $f_H : H\to \RR$ and $f_{/H}: G_1\to \RR$. The following implications are straightforward:

\begin{alignat*}{3}
  f & \in \iS_G(A) &\ \Rightarrow\ & f_H \in \iS_H(A_H),  & \ f_{/H} \in \iS_{G_1}(A/H), \\
  f & \in \iS^+(A) &\ \Rightarrow\ & f_H \in \iS^+_H(A_H),  & \ f_{/H} \in \iS^+_{G_1}(A/H), \\
  f & \in \iS^-(A) &\ \Rightarrow\ & f_H \in \iS^-_H(A_H),  & \ f_{/H} \in \iS^-_{G_1}(A/H), \\
  f & \in \iS^\pm (A) &\ \Rightarrow\ & f_H \in \iS^{\pm}_H(A_H),  & \ f_{/H} \in \iS^-_{G_1}(A/H),.
\end{alignat*}

Assuming that $\hat f\geq 0$  the relation $\hat f_H\geq 0$ can be seen in the same manner as in \eqref{sgtrick} in the proof of Theorem \ref{ambient}. Note also that
\begin{equation}\label{e1}
\frac{f_H(0)}{\hat f_H (\1)}=\frac{f(0)}{\sum_{x\in H}f(x)}.
\end{equation}
Furthermore, $\hat f_{/H}\geq 0$ also holds, because for each $\gamma\in \hat G_1$ we have \\
$\hat f_{/H}(\gamma) =\sum_{x+H\in G_1}f_{/H}(x+H)\gamma (x+H)= \sum_{x+H\in G_1} (\sum_{y\in (x+H)} f(y)) \gamma(x+H)=\sum_{x+H\in G_1} (\sum_{y\in (x+H)} f(y) \gamma^{\times H}(y))=\hat f(\gamma^{\times H})\geq 0$. Observing that
\begin{equation}\label{e2}
\frac{f_{/H}(0)}{\hat f_{/H} (\1)}=\frac{\sum_{x\in H}f(x)}{\hat f(\1)}
\end{equation}
and using \eqref{e1} we obtain the required inequalities \eqref{factl}, \eqref{factlp}, \eqref{factlm}, \eqref{factlpm}.
\end{proof}

We note here that the last inequality is less symmetric than the others. We do
not know whether the stronger inequality
\[   \lambda^\pm (A) \geq  \lambda^\pm (A_H) \lambda^\pm (A_1)  \]
holds or not.

\section{Direct products}\label{secdirect}

In this section we consider the behaviour of the various $\delta$ and $\lambda$ quantities under the direct product operation.

\medskip

\begin{Th}
Let $G=G_1\times G_2$ be the direct product of two finite commutative groups, and let $A=A_1\times A_2$, where $A_1\subset G_1$, $A_2\subset G_2$. We have

\begin{equation}\label{dirl}
\lambda (A)=\lambda (A_1) \lambda (A_2),
\end{equation}

\begin{equation}\label{dirlp}
\lambda^+ (A)=\lambda^+ (A_1) \lambda^+ (A_2),
\end{equation}

\begin{equation}\label{dirlm}
\lambda^- (A_1) \lambda^- (A_2)\leq \lambda^- (A)\leq \lambda^- (A_1) \lambda^+ (A_2),
\end{equation}

\begin{equation}\label{dirlpm}
\lambda^\pm (A_1) \lambda^- (A_2)\leq \lambda^\pm (A)\leq \lambda^\pm (A_1) \lambda^+ (A_2).
\end{equation}
\end{Th}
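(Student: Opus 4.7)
The plan is to use tensor products of admissible functions for the upper bounds, and slicing combined with partial summation for the lower bounds. For the upper bounds, given admissible $f_i$ on $G_i$ (with nonnegative Fourier transform) attaining or approaching the minimum in the appropriate class, set $f(x,y) = f_1(x) f_2(y)$. Then $\hat f(\gamma_1, \gamma_2) = \hat f_1(\gamma_1) \hat f_2(\gamma_2) \geq 0$ and the ratio
\[
\frac{f(0,0)}{\hat f(\1,\1)} = \frac{f_1(0)}{\hat f_1(\1)} \cdot \frac{f_2(0)}{\hat f_2(\1)}
\]
factors. A case check of sign conditions shows $\iS(A_1) \otimes \iS(A_2) \subset \iS(A)$, $\iS^+(A_1) \otimes \iS^+(A_2) \subset \iS^+(A)$, $\iS^-(A_1) \otimes \iS^+(A_2) \subset \iS^-(A)$ and $\iS^\pm(A_1) \otimes \iS^+(A_2) \subset \iS^\pm(A)$, yielding the four upper bounds. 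In the last two cases the pairing with $\iS^+$ is forced: if both factors were allowed to be negative off their sets, the product could become positive on the ``double complement'' $(G_1 \setminus A_1) \times (G_2 \setminus A_2)$, destroying the $\iS^-$ or $\iS^\pm$ property.

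For the lower bounds, let $f$ be any admissible function for $\varphi(A)$ with $\hat f \geq 0$, and introduce the slice and partial sum
\[
h(x) = f(x, 0), \qquad g(y) = \sum_{x \in G_1} f(x, y).
\]
A short Fourier calculation, essentially the trick of \eqref{sgtrick}, gives
\[
\hat h(\gamma_1) = \frac{1}{|G_2|} \sum_{\psi_2 \in \hat G_2} \hat f(\gamma_1, \psi_2) \geq 0, \qquad \hat g(\gamma_2) = \hat f(\1, \gamma_2) \geq 0,
\]
together with $h(0) = f(0,0)$, $\hat h(\1) = g(0)$ and $\hat g(\1) = \hat f(\1, \1)$ (all strictly positive since $\varphi(A) > 0$ forces the ratio to be finite).

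The class memberships follow from the facts that $(x, 0) \in A$ iff $x \in A_1$, and that for $y \notin A_2$ the whole slice $f(\cdot, y)$ lies off $A$. The former yields $f \in \iS^?(A) \Rightarrow h \in \iS^?(A_1)$ for every decoration $? \in \{\text{plain}, +, -, \pm\}$. The latter yields $g \in \iS(A_2)$, $\iS^+(A_2)$, $\iS^-(A_2)$ for $f$ in $\iS(A), \iS^+(A), \iS^-(A)$ respectively, and crucially $f \in \iS^\pm(A) \Rightarrow g \in \iS^-(A_2)$ only --- degraded from $\iS^\pm$ because on $y \in A_2$ the partial sum mixes nonnegative values ($x \in A_1$) with nonpositive ones ($x \notin A_1$), so nothing can be said about its sign. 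Combining
\[
\varphi_1(A_1) \leq \frac{h(0)}{\hat h(\1)} = \frac{f(0,0)}{g(0)}, \qquad \varphi_2(A_2) \leq \frac{g(0)}{\hat g(\1)} = \frac{g(0)}{\hat f(\1,\1)}
\]
with the partner pairs $(\varphi_1, \varphi_2) = (\lambda, \lambda), (\lambda^+, \lambda^+), (\lambda^-, \lambda^-), (\lambda^\pm, \lambda^-)$, and taking the infimum over admissible $f$, produces the four lower bounds.

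The main obstacle is the sign bookkeeping, especially the asymmetric behavior of $\iS^\pm$: in both the tensor product construction and the partial-summation argument, $\iS^\pm$ must be paired with $\iS^+$ (respectively drops to $\iS^-$). This same asymmetry is what makes the $\lambda^-$ and $\lambda^\pm$ bounds two-sided rather than exact equalities, while for $\lambda$ and $\lambda^+$ both directions match and we get equality.
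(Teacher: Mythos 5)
Your proof is correct and takes essentially the same approach as the paper: the upper bounds come from the tensor product $f_1(x)f_2(y)$ with identical case analysis of the $\iS$-classes, and your slicing and partial-summation argument for the lower bounds is precisely the paper's subgroup-and-quotient theorem (with $H=G_1\times\{0\}$ and $G/H\cong G_2$) specialized to the direct product setting, which the paper simply cites rather than re-deriving inline.
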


\medskip

\begin{proof}
The claimed lower bounds on $\lambda (A)$,   $\lambda^+ (A)$, $\lambda^- (A)$, $\lambda^\pm (A)$ follow from inequalities \eqref{factl}, \eqref{factlp}, \eqref{factlm}, \eqref{factlpm}, respectively.

To prove the upper bounds, let $f_1$ and $f_2$ be appropriate functions for the sets $A_1$, $A_2$, and consider the function $h(x,y)=f_1(x)f_2(y)$.  The following implications are straightforward:
\begin{alignat*}{3}
  f_1 & \in \iS(A_1),\  & f_2 &\in \iS(A_2) &\ \Rightarrow\  & h \in \iS(A_1 \times A_2), \\
  f_1 & \in \iS^+(A_1),\  & f_2 &\in \iS^+(A_2) &\ \Rightarrow\  & h \in \iS^+(A_1 \times A_2), \\
  f_1 & \in \iS^-(A_1),\  & f_2 &\in \iS^+(A_2) &\ \Rightarrow\  & h \in \iS^-(A_1 \times A_2), \\
  f_1 & \in \iS^\pm (A_1),\  & f_2 &\in \iS^+(A_2) &\ \Rightarrow\  & h \in \iS^\pm (A_1 \times A_2).
\end{alignat*}
Also, $\hat h\geq 0$ follows from $\hat f_1\geq 0$ and $\hat f_2\geq 0$, and $h(0)=f_1(0)f_2(0)$ and $\hat h(\1)=\hat f_1(\1) \hat f_2(\1)$. Therefore, the function $h$
testifies the upper bounds in \eqref{dirl}, \eqref{dirlp}, \eqref{dirlm} and \eqref{dirlpm},
\end{proof}

\begin{Th}
Let $G=G_1\times G_2$ be the direct product of two finite commutative groups, and let $A=A_1\times A_2$, where $A_1\subset G_1$, $A_2\subset G_2$. We have

\begin{equation}\label{dirdf}
\df(A)= \df (A_1)\df  (A_2),
\end{equation}

\begin{equation}\label{dird}
\delta (A_1)\delta (A_2) \leq \delta(A) \leq \delta(A_1) \df  (A_2).
\end{equation}
\end{Th}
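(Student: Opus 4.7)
The plan is to split off the four inequalities. Three of them are essentially direct product constructions or fiber projections; the only nontrivial step is the upper bound $\delta(A)\le\delta(A_1)\df(A_2)$ in \eqref{dird}, which will be the main obstacle.

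For the two lower bounds I would just take optimal sets in each factor and multiply. In \eqref{dirdf} the bound $\Df(A)\ge\Df(A_1)\Df(A_2)$ follows by setting $\bar B = \bar B_1\times \bar B_2$ with $\bar B_i-\bar B_i\subset A_i$; its difference set is $(\bar B_1-\bar B_1)\times(\bar B_2-\bar B_2)\subset A_1\times A_2=A$. Similarly, $\delta(A)\ge\delta(A_1)\delta(A_2)$ follows from $B=B_1\times B_2$ with $(B_i-B_i)\cap A_i=\{0\}$ by a coordinatewise check (this is also a special case of \eqref{factd} applied with $H=\{0\}\times G_2$). The upper bound $\Df(A)\le\Df(A_1)\Df(A_2)$ in \eqref{dirdf} is a standard fiber argument: given $\bar B\subset G$ with $\bar B-\bar B\subset A$, every fiber $\bar B_x=\{y:(x,y)\in\bar B\}$ satisfies $\bar B_x-\bar B_x\subset A_2$, and the projection $\pi_1(\bar B)$ satisfies $\pi_1(\bar B)-\pi_1(\bar B)\subset A_1$, so $|\bar B|\le |\pi_1(\bar B)|\cdot\max_x|\bar B_x|\le\Df(A_1)\Df(A_2)$.

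The real work is the bound $\delta(A)\le\delta(A_1)\df(A_2)$: here the analogous projection to $G_1$ fails, because $\pi_1(B)$ of a set $B$ with $(B-B)\cap A=\{0\}$ need not satisfy $(\pi_1(B)-\pi_1(B))\cap A_1=\{0\}$, since two fibers above $x,x'$ with $x-x'\in A_1$ can coexist provided their $y$-coordinates differ only in $G_2\setminus A_2$. My plan is a hybrid sumset trick. Fix $B\subset G$ optimal for $\delta(A)$ and $\bar B_2\subset G_2$ optimal for $\df(A_2)$, and form $C=B+(\{0\}\times\bar B_2)\subset G$. I would argue in two stages. First, the sum defining $C$ is direct: if $b+(0,z)=b'+(0,z')$ with $b,b'\in B$ and $z,z'\in\bar B_2$, then $b-b'\in\{0\}\times(\bar B_2-\bar B_2)\subset\{0\}\times A_2\subset A$, which by the defining property of $B$ forces $b=b'$ and $z=z'$; hence $|C|=|B|\cdot|\bar B_2|$. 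Second, for each $y\in G_2$ the horizontal slice $C^y=\{x:(x,y)\in C\}\subset G_1$ satisfies $(C^y-C^y)\cap A_1=\{0\}$: if $c_1,c_2\in C^y$ have first coordinates differing by some $u\in A_1$, writing $c_i=b_i+(0,z_i)$ gives $b_1-b_2=(u,z_2-z_1)\in A_1\times A_2=A$, so $b_1=b_2$ and hence $c_1=c_2$. Consequently $|C^y|\le\Delta(A_1)$ for each of the $q_2$ values of $y$. Summing yields $|B|\cdot\Df(A_2)=|C|\le q_2\Delta(A_1)$, and dividing by $q_1q_2\Df(A_2)$ gives $\delta(A)\le\delta(A_1)\df(A_2)$, as required.
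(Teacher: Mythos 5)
Your proof is correct and takes essentially the same route as the paper: the two lower bounds and $\Df(A)\le\Df(A_1)\Df(A_2)$ are the standard product/projection arguments, and the key upper bound $\delta(A)\le\delta(A_1)\df(A_2)$ uses exactly the same packing set $C=B+(\{0\}\times\bar B_2)$ that the paper constructs, with your slice-by-slice count $|C|=\sum_y|C^y|\le q_2\Delta(A_1)$ being just a repackaging of the paper's observation that $(C-C)\cap(A_1\times\{0\})=\{(0,0)\}$ implies $|C|\le\Delta(A_1\times\{0\})=q_2\Delta(A_1)$. (One small slip of the pen: elements of $C^y$ lie in $G_1$, so they do not themselves have ``first coordinates''; you mean that $(c_1,y)$ and $(c_2,y)$ differ by $(u,0)$, which is clear from context.)
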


\begin{proof}
Given sets $B_1, B_2$ with $B_1-B_1\subset A_1$,  $B_2-B_2\subset A_2$, their product $B=B_1 \times B_2$ satisfies $B-B\subset A$. Conversely, if  $B-B\subset A$,
and $B_1, B_2$ are the projections of $B$, then we have $B_1-B_1\subset A_1$,  $B_2-B_2\subset A_2$ and  $B\subset B_1 \times B_2$. This shows \eqref{dirdf}.

Given sets $B_1\subset G_1, B_2\subset G_2$ with $(B_1-B_1)\cap A_1 = \{0\} $, $(B_2-B_2)\cap A_2 = \{0\} $  their product $B=B_1 \times B_2$ satisfies  $(B-B)\cap A = \{0\} $.
This shows the lower estimate in \eqref{dird}.

To prove the upper estimate we rewrite it in the form
 \[ \frac{\Delta(A)}{q} \leq \frac{\Delta(A_1)}{q_1} \frac{1}{\Df(A_2)},   \]
where $q_i=|G_i|$ and $q=|G| =q_1q_2$. This can be rearranged as
\begin{equation}\label{direktdelta}
 \Df(A_2) \Delta(A) \leq q_2 \Delta(A_1) =\Delta(A_1 \times \{0\}).
\end{equation}
Let $B_2\subset G_2$, $B\subset G$ be maximal sets with the properties $B_2-B_2\subset A_2$, $(B-B)\cap A=\{0\}$. Then the left hand side of \eqref{direktdelta} is $|B_2| |B|$. Notice that $(\{0\}\times B_2)+B$ is a packing in $G$: if $(0,b_i)\in B_2$ and $(t_i,u_i)\in B$ (for $i=1,2$) then $(0,b_1)+(t_1,u_1)=(0,b_2)+(t_2,u_2)$ is equivalent to $(0,b_1-b_2)=(t_2-t_1, u_2-u_1)$, which is possible only if both coordinates are 0. Let $C=(\{0\}\times B_2)+B$. Then $|C|=|B_2| |B|$ due to the packing property. Also, we claim that $C-C\cap (A_1 \times \{0\})=\{(0,0)\}$. Consider  $(v_1,v_2)=(0,b_1-b_2)+(t_1-t_2, u_1-u_2)\in C-C$. Here $b_1-b_2\in A_2$ so $v_2$ can only be zero if $u_2-u_1\in A_2$, which means that $u_1-u_2\in A_2$ (recall that $A_2$ is symmetric). Also, $v_1\in A_1$ means that that $t_1-t_2\in A_1$. Therefore $(t_1-t_2,u_1-u_2)\in A_1\times A_2$, which is only possible if   $(t_1-t_2,u_1-u_2)=\{0,0\}$, and $(v_1,v_2)=\{(0,0)\}$.
\end{proof}

\begin{Ex}
  Let $G_1=G_2$, $A_1\subset G_1$ arbitrary, $A_2$ its standard complement, $A=A_1\times A_2 \subset G=G_1 \times G_2 $, $|G| =q=q_1^2$. We have
   \[ \delta(A) = \lambda(A) = 1/q_1 = q^{-1/2} . \]
   Indeed, $ \delta(A)\leq \lambda(A)=\lambda (A_1) \lambda (A_2) = |G_1|^{-1} = 1/q_1$ by the previous theorem and duality. We also have $\delta(A) \geq1/q_1  $,
since the diagonal $B = \{ (x,x): x\in G_1 \} $ satisfies $(B-B)\cap A = \{0\} $.

This is also an example when the upper estimate of \eqref{dird} holds with equality, since $\delta(A_1)\df(A_2)=1/q_1$ by duality.

In contrast, $\overline \delta(A) = \overline \delta(A_1)\overline \delta(A_2)$ can be quite near 1. A random set satisfies
 \[ \max (    \overline \Delta(A_1), \overline \Delta(A_2)  \lesssim (\log q)^2 ,    \]
 see the next section, and then we have $\overline \delta(A) \gtrsim ( \log q)^{-4}$.
\end{Ex}

\section{Random sets}\label{secrandom}

First we describe our notion of a random standard set. Given a finite group $G$, write
 \[ G_1 = \{ x\in G: 2x=0  \} ,\]
 the set of elements of order 2 (and the unit). The set $G \setminus G_1$ is a disjoint union of pairs $\{x,-x \} $; let $G_2$ be a set
 containing exactly one element of each pair. We have
  \[ G = G_1 \cup G_2 \cup -G_2, \]
  a disjoint union. Write $|G_i| =q_i$, so that $q=q_1+2q_2$.

  Take a real number $\rho\in(0,1)$. Let  $\{ \xi_y, y\in G_1 \cup G_2 \} $  be a collection of independent 0-1 valued random variable satisfying
   \[ \Pr (\xi_y=1) = \rho .\]
   Our random standard set corresponding to the prescribed probability $\rho$ will be
    \[ R = \{0 \} \cup  \{y\in G_1: \xi_y=1 \} \cup \bigcup_{y\in G_2, \xi_y=1} \{y, -y \}. \]
    Nothing depends on the value of $\xi_0$ as 0 must be in $R$ deterministically, but some formulas will look nicer using it.
    Observe that
     \[ \E  ( |R| ) = 1+\rho(q-1) . \]

The standard complement of a random set will be a random standard set corresponding to the probability $1-\rho$. In the case $\rho=1/2$
this observation, together with the dualities of Section \ref{secduality} shows that the medians of $\lambda$ and $\lambda^\pm $ are both $q^{-1/2}$.

To control various quantities related to our random set we need a large deviation estimate. Many forms of Bernstein's
(or Chernov's) inequality will work; we quote one from Tao and Vu's book \cite{taovu06}[Theorem 1.8] which is comfortable for us.

\begin{Lemma} \label{largedev}
  Let $X_1, \ldots, X_n$ be independent random variables satisfying $| X_i - \E(X_i)| \leq 1 $ for all $i$. Put
  $ X = X_1 + \ldots + X_n$ and let $\sigma^2$ be the variance of $X$. For any $t>0$ we have
   \[ \Pr( |X - \E(X)| \geq t\sigma) \leq 2 \max \left( e^{-t^2/4}, e^{-t\sigma/2} \right) .\]
\end{Lemma}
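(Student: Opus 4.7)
The plan is to prove this by the standard exponential moment method of Bernstein and Chernoff. Without loss of generality I may assume $\E(X_i)=0$ for all $i$ by centering; this preserves both the boundedness hypothesis $|X_i|\leq 1$ and the variance $\sigma^2=\sum_i\sigma_i^2$, where $\sigma_i^2=\E(X_i^2)$. The strategy is to control the upper tail via Markov's inequality applied to $e^{\lambda X}$, namely
\[ \Pr(X\geq t\sigma)\leq e^{-\lambda t\sigma}\,\E(e^{\lambda X})=e^{-\lambda t\sigma}\prod_i \E(e^{\lambda X_i}), \]
for any $\lambda>0$, and then optimize $\lambda$ in two different regimes of $t$.

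The key ingredient is a bound on each $\E(e^{\lambda X_i})$ for $\lambda\in[0,1]$. Expanding the exponential as a power series and using $|X_i^k|\leq X_i^2$ for $k\geq 2$, one obtains
\[ \E(e^{\lambda X_i})\leq 1+\sigma_i^2\sum_{k\geq 2}\frac{\lambda^k}{k!}=1+\sigma_i^2(e^\lambda-1-\lambda)\leq 1+\sigma_i^2\lambda^2, \]
where the final inequality uses the estimate $e^\lambda-1-\lambda\leq\lambda^2$ on $[0,1]$ (immediate by comparing Taylor coefficients, since $\sum_{k\geq 2}1/k!=e-2<1$). Applying $1+y\leq e^y$ and multiplying over $i$ gives $\E(e^{\lambda X})\leq\exp(\sigma^2\lambda^2)$, hence
\[ \Pr(X\geq t\sigma)\leq\exp(\sigma^2\lambda^2-\lambda t\sigma)\qquad\text{for all }\lambda\in[0,1]. \]

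It remains to choose $\lambda$ suitably. In the \emph{sub-Gaussian} regime $t\leq 2\sigma$, I take $\lambda=t/(2\sigma)\in[0,1]$, which yields the bound $\exp(-t^2/4)$. In the \emph{sub-exponential} regime $t>2\sigma$, I take $\lambda=1$; the exponent becomes $\sigma^2-t\sigma$, and since $\sigma^2\leq t\sigma/2$ in this range, this is at most $\exp(-t\sigma/2)$. In either case the upper tail is bounded by $\max(e^{-t^2/4},e^{-t\sigma/2})$. The same argument applied to $-X$ gives the matching lower-tail bound, and a union bound produces the factor of $2$ in the statement. There is no serious obstacle here; the result is a textbook computation, with the only mildly delicate step being the verification of $e^\lambda-1-\lambda\leq\lambda^2$ on $[0,1]$, which is exactly what fixes the constant $1/4$ in the Gaussian exponent.
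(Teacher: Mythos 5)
Your argument is correct and complete. A few small checks confirm the details: after centering, $|X_i|\leq 1$ and $\sigma^2=\sum_i\E(X_i^2)$; the moment bound $\E(X_i^k)\leq\E(|X_i|^k)\leq\E(X_i^2)=\sigma_i^2$ for $k\geq 2$ is what makes the series comparison valid; the numeric bound $e^\lambda-1-\lambda\leq(e-2)\lambda^2\leq\lambda^2$ on $[0,1]$ holds; and the choice $\lambda=\min(t/(2\sigma),1)$ does give $e^{-t^2/4}$ when $t\leq 2\sigma$ and $e^{\sigma^2-t\sigma}\leq e^{-t\sigma/2}$ when $t>2\sigma$. Since $e^{-t^2/4}\geq e^{-t\sigma/2}$ exactly when $t\leq 2\sigma$, the two cases assemble precisely into the stated $\max$, and the two-sided bound with the factor $2$ follows by symmetry and a union bound.

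One thing worth knowing: the paper does not prove this lemma at all. It explicitly quotes it from Tao and Vu's \emph{Additive Combinatorics} (Theorem 1.8), remarking that ``many forms of Bernstein's (or Chernov's) inequality will work.'' So there is no in-paper argument to compare against; your proof is the standard exponential-moment (Chernoff/Bernstein) derivation, which is exactly the kind of proof the cited source gives, and it is self-contained, so it fills in what the paper left as a citation. No gap.
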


\begin{Th} \label{randomlambda}
  Assume
 $$ 1 <c < \frac{q}{32\log q}$$
(hence implicitely $q \geq 164$) and
   \[ 16c \frac{\log q}{q} < \rho < 1 - 16c \frac{\log q}{q}  . \]
   With probability exceeding $1- 2 q^{1-c}$ the random set $R$ corresponding to probability $\rho$ satisfies
    \[ \bigl| |R|-\rho q \bigr| < 3  \sqrt{c\rho(1-\rho)q \log q} ,  \]
    \[ \frac{1}{3 \sqrt{c \log q}} \sqrt{ \frac{ 1-\rho}{\rho q}} < \lambda^- (R) \leq \lambda^+(R) < {3 \sqrt{c \log q}} \sqrt{ \frac{ 1-\rho}{\rho q}}.   \]

\end{Th}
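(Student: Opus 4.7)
The strategy is to analyze the Fourier coefficients $\hat 1_R(\gamma)$ directly, using Bernstein's inequality (Lemma \ref{largedev}) to control them for every $\gamma$ simultaneously. The $|R|$ bound falls out of the $\gamma=\1$ case; the $\lambda^+$ upper bound follows by perturbing $1_R$ into a function with nonnegative Fourier transform; and the $\lambda^-$ lower bound comes from the duality $\lambda^-(R)\lambda^+(R')=1/q$ of Theorem \ref{duality}. The symmetry $R=-R$ forces
\[
\hat 1_R(\gamma) = 1 + \sum_{y\in G_1\setminus\{0\}}\xi_y\gamma(y) + \sum_{y\in G_2}\xi_y\bigl(\gamma(y)+\gamma(-y)\bigr)
\]
to be a sum of bounded \emph{real} independent random variables (note $\gamma(y)=\pm1$ for $y\in G_1$). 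A direct calculation gives $\mathbb E\hat 1_R(\gamma)=1-\rho+\rho\sum_{y\in G}\gamma(y)$, which equals $1+\rho(q-1)$ for $\gamma=\1$ and $1-\rho$ otherwise, and the variance is at most $\rho(1-\rho)(q_1-1+4q_2)\leq 2\rho(1-\rho)q$.

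\textbf{Step 1 — Concentration.} I would rescale by $\tfrac12$ so the summands are bounded by $1$ in absolute deviation from their means, then apply Lemma \ref{largedev} to each $\hat 1_R(\gamma)$ with a parameter satisfying $t^2/4\geq c\log q$. The hypothesis $\rho,1-\rho>16c(\log q)/q$ implies $\rho(1-\rho)q>8c\log q$, which guarantees that the standard deviation is large enough that the $e^{-t\sigma/2}$ branch of the Bernstein bound is dominated by the Gaussian branch. This yields
\[
\Pr\!\left(\bigl|\hat 1_R(\gamma)-\mathbb E\hat 1_R(\gamma)\bigr|\geq D\right)\leq 2q^{-c},\qquad D\leq 2\sqrt{2c\rho(1-\rho)q\log q},
\]
for each fixed $\gamma$. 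A union bound over the $q$ characters gives probability at least $1-2q^{1-c}$ that the deviation bound holds for every $\gamma$ simultaneously. In the small-variance regime (the variance can be much less than $2\rho(1-\rho)q$ for particular $\gamma$) Bernstein degenerates to deviation $\leq 2c\log q$, which is absorbed under the hypothesis.

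\textbf{Step 2 — Consequences on the good event.} The $\gamma=\1$ case immediately gives $\bigl||R|-\rho q\bigr|<3\sqrt{c\rho(1-\rho)q\log q}$, the extra $1-\rho$ from $\mathbb E|R|=\rho q+(1-\rho)$ absorbed into the constant $3$. For the $\lambda^+$ bound I introduce the test function
\[
f = M\cdot 1_{\{0\}} + 1_R, \qquad M:=\max\!\Bigl\{0,\ -\min_{\gamma\neq\1}\hat 1_R(\gamma)\Bigr\},
\]
which lies in $\iS^+(R)$ (since $0\in R$), satisfies $\hat f(\gamma)=M+\hat 1_R(\gamma)\geq 0$ by the choice of $M$, and obeys
\[
\frac{f(0)}{\hat f(\1)}=\frac{M+1}{M+|R|}\leq\frac{M+1}{|R|}.
\]
Step 1 gives $M\leq D\leq 3\sqrt{c\rho(1-\rho)q\log q}$, and the $|R|$ bound together with the hypothesis $\rho q>16c\log q$ gives $|R|\geq(1-o(1))\rho q$, so the ratio is at most $3\sqrt{c\log q}\sqrt{(1-\rho)/(\rho q)}$ with a little slack to absorb the additive $1$.

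\textbf{Step 3 — The $\lambda^-$ lower bound via duality.} The standard complement $R'$ is itself a random standard set at probability $1-\rho$, and the identity $\hat 1_{R'}(\gamma)=1-\hat 1_R(\gamma)$ for $\gamma\neq\1$ shows that the good event of Step 1 also controls the Fourier coefficients of $R'$. Applying Step 2 with $R$ and $\rho$ replaced by $R'$ and $1-\rho$ gives $\lambda^+(R')<3\sqrt{c\log q}\sqrt{\rho/((1-\rho)q)}$. Theorem \ref{duality} then yields
\[
\lambda^-(R)=\frac{1/q}{\lambda^+(R')}>\frac{1}{3\sqrt{c\log q}}\sqrt{\frac{1-\rho}{\rho q}}.
\]

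\textbf{Main obstacle.} The hard part is bookkeeping the constants so that the single factor $3$ works in all three bounds. Bernstein naturally produces a $2\sqrt 2\approx 2.83$ in front of $\sqrt{c\rho(1-\rho)q\log q}$; squeezing this up to $3$ is what the factor $16$ (rather than, say, $2$) in the hypothesis $\rho>16c(\log q)/q$ is buying, since it forces the correction terms $|1-\rho|$ in the mean and $M$ in the denominator $M+|R|$ to be small relative to $\rho q$ by a concrete margin. One also has to be careful that the small-variance branch of Bernstein genuinely gives the same $2q^{-c}$ with the same $D$; the slack from $c<q/(32\log q)$ handles this.
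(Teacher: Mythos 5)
Your proposal is correct and follows essentially the same route as the paper: compute the mean and variance of each Fourier coefficient of the (symmetrized) indicator of $R$, apply Bernstein with a union bound over the $q$ characters, then perturb at $0$ to force the Fourier transform nonnegative and estimate $f(0)/\hat f(\1)$, with the $\lambda^-$ lower bound obtained by duality applied to the complement. The only cosmetic differences are that the paper uses $f_0(0)=\xi_0$ rather than the deterministic $1_R(0)=1$ (which shifts the off-principal means from $1-\rho$ to $0$), and it adds the fixed quantity $a=\sqrt{8c\rho(1-\rho)q\log q}$ at $0$ rather than your random $M$, but on the good event $M\leq a$ and the two test functions serve identically.
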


\begin{proof}
Put $ f_0(x) = \xi_x$ if $x\in G_1\cup G_2$, $f_0(x)=\xi_{-x}$ if $x\in-G_2$.
The function testifying the upper estimate will be this with a modified value at 0.

 We calculate the expectation and variance of $\hat f_0$. Clearly
  \[ \hat f_0(\gamma) = \sum_{y\in G_1} \xi_y \gamma(y) + 2 \sum_{y\in G_2} \xi_y \re \gamma(y) ,  \]
  hence
  \[ \E (\hat f_0(\gamma)) = \rho \sum_{y\in G_1} \gamma(y) + 2 \rho \sum_{y\in G_2}  \re \gamma(y) =
  \begin{cases}
    \rho q & \text{if } \gamma=\1, \\ 0 & \text{otherwise.}
  \end{cases} \]
  Similaly, the variance is

  \begin{align*}
    \D^2 (\hat f_0(\gamma)) & = \rho(1-\rho) \left( \sum_{y\in G_1} \gamma(y)^2  +  \sum_{y\in G_2}  \bigl(2 \re \gamma(y)\bigr)^2 \right) \cr
& =
  \begin{cases}
   \rho(1-\rho)( 2q_2+q) & \text{if } \gamma^2=\1, \\2\rho(1-\rho) q_2 & \text{otherwise,}
  \end{cases} \end{align*}
consequently
 \[ \D^2 (\hat f_0(\gamma))< 2\rho(1-\rho)q . \]
 We apply Lemma \ref{largedev} with an obvious rescaling (the variables $2 \re \gamma(y) \xi_y$ are bounded by 2 rather than 1)
 to obtain that in the range $ t \leq 2\sqrt{\rho(1-\rho)q}$
  \[ \Pr \bigl( |  \hat f_0(\gamma)| \geq t \sqrt{\rho(1-\rho)q} \bigr) \leq 2 e^{-t^2/8}  \  \ (\gamma \neq \1 ), \]
\[ \Pr \bigl( |  \hat f_0(\1) - \rho q | \geq t \sqrt{\rho(1-\rho)q} \bigr) \leq 2 e^{-t^2/8}  . \]
We put $t= \sqrt{8c  \log q}$ (this is in accordance with $ t \leq 2\sqrt{\rho(1-\rho)q}$, as the assumptions of the theorem on $\rho$ show), so that the right hand sides above become $2q^{-c}$. Since there are altogether $q$ possible
characters $\gamma$, with probability  $1- 2 q^{1-c}$ none of the above events happens. In this favourable case we write
\[ a =  t \sqrt{\rho(1-\rho)q} =   \sqrt{8c\rho(1-\rho)q \log q}        , \]
 \[ f(x) = \begin{cases}
   f_0(x)+a & \text{if } x=0, \\ f_0(x) & \text{otherwise,}
 \end{cases} \]
  \[ \hat f(\gamma) = \hat f_0(\gamma) + a \geq \begin{cases}
   0  & \text{always }, \\ \rho q & \text{if } \gamma=\1.
 \end{cases} \]
 This shows $f\in\mathcal S^+(R)$ and consequently
  \[ \lambda^+(R) \leq \frac{f(0)}{\hat f(\1)} < \frac{1+a}{\rho q} < {3 \sqrt{c \log q}} \sqrt{ \frac{ 1-\rho}{\rho q}}. \]

    To prove the lower estimate let $R'$ be the standard complement of $R$, which is a random standard set
for probability $1-\rho$, hence the above argument gives
  \[ \lambda^+(R')  < {3 \sqrt{c \log q}} \sqrt{ \frac{ \rho}{(1-\rho)q}} \]
    with the same probability. The lower estimate follows from the duality relation in Theorem \ref{duality}.

    The estimate of $|R|$ follows from $|R| = \hat f_0(\1)$ or $\hat f_0(\1) + 1$.
\end{proof}

Our lower and upper estimates differ by a factor of $\log q$. We have no guess whether this is necessary, or
the values of the $\lambda$'s are more concentrated. The large deviation estimate used is quite sharp. If the values of
$\hat f_0(\gamma)$ were independent for different characters $\gamma$, one could deduce that
 \[ \min  \hat f_0(\gamma) < - c_1a \]
 with high probability, with some positive constant $c_1$.
They are far from independent, but still it is likely that their dependence is not very strong, and
 the existence of large negative values can be proved.
On the other hand there is no reason to think that the uniform weights used in the proof above are near optimal.

Now we turn to estimating the $\delta$ quantities. This problem drew some attention in the case $\rho=1/2$, in the context of
estimating the clique number of Cayley graphs. Alon and Orilitsky \cite{AO} proved that typically $\Delta(R) \lesssim (\log q)^2$ in this
case. Below we adapt their proof for general $\rho$. Green \cite{Green} improved this estimate to the optimal $ O(\log q)$ for
cyclic groups. (Green considers sumsets rather than difference sets, but an adaptation to differences is
possible.) Prakash \cite{Prakash} improved  Alon and Orilitsky's estimate for general commutative groups with cardinality composed
of few primes. It is likely that Green's and Prakash' methods can also be extended to general $\rho$.

\begin{Th} \label{randomdelta}

(a)  Assume
   \[ q^{-1/2} < \rho < 1 -  q^{-1/3} {\log q}  . \]
   With probability exceeding $1- \exp \left(-c_1  \log^2 q/ \log \frac{1}{\rho} \right) $ the random set $R$ corresponding to probability $\rho$ satisfies
\begin{equation}\label{kombibecsles}
 \Df (R) < c_2 \left( \frac{\log q}{\log \frac{1}{\rho}} \right)^2, \  \overline \delta (R) > \frac{1}{c_2} \left( \frac{\log \frac{1}{\rho}}{\log q} \right)^2  . \end{equation}
    Here $c_1, c_2$ are absolute constants.
    In the range
    \[  1 -  q^{-1/3} {\log q} <\rho<  1 - 16c \frac{\log q}{q}, \  1 <c < \frac{q}{32\log q}   \]
     with probability exceeding $1- 2 q^{1-c}$ we have

      \[  \Df (R) < 3 \sqrt{c \log q}  \sqrt{ \frac{\rho q}{ 1-\rho}}, \  \overline \delta (R) >\frac{1}{3 \sqrt{c \log q}} \sqrt{ \frac{ 1-\rho}{\rho q}}. \]

(b)  Assume
   \[   q^{-1/3} {\log q}  < \rho < 1 -  q^{-1/2} . \]
   With probability exceeding $1- \exp \left(-c_1  \log^2 q/ \log \frac{1}{1-\rho} \right) $
the random set $R$ corresponding to probability $\rho$ satisfies
\begin{equation}\label{kombibecslesdual}
 \Delta(R) < c_2 \left( \frac{\log q}{\log \frac{1}{1-\rho}} \right)^2, \   \delta (R) < \frac{c_2}q  \left( \frac{\log q}{\log \frac{1}{1-\rho}} \right)^2  . \end{equation}
    Here $c_1, c_2$ are the same constants.
    In the range
    \[  16c \frac{\log q}{q} <\rho< q^{-1/3} {\log q} , \  1 <c < \frac{q}{32\log q}   \]
     with probability exceeding $1- 2 q^{1-c}$ we have
      \[  \Delta (R) < 3 \sqrt{c \log q}  \sqrt{ \frac{(1-\rho) q}{ \rho}}, \  \delta (R) < 3 \sqrt{c \log q}  \sqrt{ \frac{(1-\rho) }{ \rho q}} . \]
\end{Th}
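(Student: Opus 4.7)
The plan is to prove part (a) and deduce part (b) by duality. The standard complement $R' = \{0\}\cup(G\setminus R)$ is itself a random standard set with probability parameter $1-\rho$, and $\Delta(R)=\overline \Delta(R')$. The parameter ranges stated in (b) correspond exactly to those in (a) under $\rho\leftrightarrow 1-\rho$, so the estimates on $\Delta(R)$ and $\delta(R) = \overline \Delta(R')/q$ follow by applying part (a) to $R'$.

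In the Fourier range $1 - q^{-1/3}\log q < \rho < 1 - 16c\log q/q$ of part (a) the estimate is a direct consequence of Theorem \ref{randomlambda}. That theorem yields with probability exceeding $1 - 2q^{1-c}$ the lower bound
$$\lambda^-(R) > \frac{1}{3\sqrt{c\log q}}\sqrt{\frac{1-\rho}{\rho q}}.$$
Since $\overline \delta(R)\geq \lambda^+(R)\geq \lambda^-(R)$ by Theorem \ref{basic}, taking reciprocals gives $\overline \Delta(R) < 3\sqrt{c\log q}\sqrt{\rho q/(1-\rho)}$, exactly the claim in this range.

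The combinatorial range $q^{-1/2}<\rho<1-q^{-1/3}\log q$ is where the work lies; here I would adapt the argument of Alon and Orlitsky. Fix $k$ and estimate $\Pr(\exists B,\,|B|=k,\,B-B\subset R)$. The naive union bound using only $|B-B|\geq 2k-1$ is too weak, because arithmetic-progression-like $B$ impose too few constraints on $R$. The structural input needed is the lemma that any $k$-element set $B\subset G$ contains a dissociated subset $B'$ of size $m\gtrsim\sqrt k$, for which the $m(m-1)$ nonzero elements of $B'-B'$ are all distinct. After identifying $y$ with $-y$ and peeling off the elements of order $2$, these correspond to $\Omega(m^2)$ mutually independent events $\xi_y=1$, so $\Pr(B-B\subset R)\leq \Pr(B'-B'\subset R)\leq \rho^{cm^2}$. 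A union bound over dissociated $m$-sets then gives
$$\Pr(\exists B,\,|B|=k,\,B-B\subset R) \leq \binom{q}{m}\rho^{cm^2}\leq \exp\bigl(m\log(eq/m)-cm^2\log(1/\rho)\bigr).$$
Choosing $k=(c_2\log q/\log(1/\rho))^2$, so that $m\sim c_2\log q/\log(1/\rho)$, with $c_2$ a sufficiently large absolute constant makes the right-hand side at most $\exp(-c_1\log^2 q/\log(1/\rho))$, producing both estimates in \eqref{kombibecsles}.

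The main obstacle is the structural lemma: exhibiting a dissociated subset of size $\gtrsim\sqrt k$ in every $k$-subset of an arbitrary finite abelian group, and checking that $\Omega(m^2)$ of the resulting differences survive as genuinely independent Bernoulli events $\xi_y$ once one quotients by the symmetry $y\mapsto -y$ and removes order-$2$ contributions. The dissociation lemma itself is the Alon-Orlitsky greedy argument and transfers unchanged to any abelian $G$; the bookkeeping for the $\pm$-pairing and the subgroup $G_1$ of elements of order $2$ costs only absolute constants. A final sanity check: at $\rho = 1 - q^{-1/3}\log q$ the two regimes meet, since both bounds evaluate to order $q^{2/3}$ there, so the combinatorial and Fourier ranges together cover the full interval claimed.
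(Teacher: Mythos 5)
Your overall architecture matches the paper exactly: part (b) reduced to part (a) by taking standard complements, the range $1-q^{-1/3}\log q<\rho$ handled via Theorem \ref{randomlambda} together with $\overline\delta(R)\geq\lambda^+(R)$, and the combinatorial range handled by a union bound over subsets with many distinct differences, with the parameter choice $m\sim c_2\log q/\log(1/\rho)$ giving failure probability $\exp(-c_1\log^2 q/\log(1/\rho))$. The bookkeeping via ``effective cardinality'' (quotienting by $y\mapsto -y$ and treating $G_1$ separately) is exactly what the paper does, and costs only constants as you say.

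The one genuine flaw is your structural lemma, which is stated too strongly. You assert that any $k$-element set in an arbitrary finite abelian group contains a \emph{Sidon} subset $B'$ of size $\gtrsim\sqrt k$ (all $m(m-1)$ nonzero differences distinct), and that this ``is the Alon--Orlitsky greedy argument and transfers unchanged to any abelian $G$.'' That is not what the greedy argument gives. If you require all new differences $a-b_i$ to avoid both $B'-B'$ and its negation, you are avoiding $O(j^3)$ forbidden elements at step $j$, which terminates at $j\asymp k^{1/3}$, not $k^{1/2}$; the $\sqrt k$-size Sidon subset is the Koml\'os--Sulyok--Szemer\'edi theorem, which is known for $\setZ_q$ but not for general commutative groups. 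The paper is explicit about this obstruction. What the greedy argument does give (Lemma \ref{semisidon} in the paper) is an ``almost-Sidon'' subset $B'$ of size $k\leq\sqrt m$ with $|B'-B'|\geq 1+\frac{k(k-1)}{2}\bigl(1-\frac{k(k-1)}{2m}\bigr)$, i.e.\ at least a positive fraction of the $k(k-1)$ differences are distinct. That weaker conclusion already yields $\Omega(m^2)$ independent events $\xi_y=1$, so your union bound and your choice of $m$ go through unchanged once you replace the false strong lemma by the correct almost-Sidon version. So the proof is salvageable with essentially no new work, but as written the key lemma is overclaimed and the attribution to the greedy argument is wrong.
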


For small values of $\rho$  estimate \eqref{randomdelta}
   stops improving; we shall study later the passage of $\overline \Delta$ from 2 to 3. For $\rho$ very near 1
the estimate becomes trivial.

We start with some preparation.
     We define the \emph{effective cardinality} of a standard set by the formula
      \[ |A| ' = |A\cap(G_1 \cup G_2)|-1 . \]
      This quantity is between $( |A|-1)/2$ and  $ |A|-1$. The probability that a difference set of a given set $B$ is contained
      in a random standard set is
       \[ \Pr (B-B \subset R) = \rho^{|B-B| '} . \]
       Consequently the expected number of difference sets of sets of cardinality $k$ contained in $R$ is
        \[ \sum_{B\subset G, |B| =k}  \rho^{|B-B| '} . \]
        This quantity is difficult to control, because we do not know enough about the distribution of $|B-B|$. When $k$ is small
compared to $q$, we expect that for most sets  $|B-B|$ will be of size $ > ck^2$, but there is no applicable result of this kind.
Instead we will select such subsets of an arbitrary set.

\begin{Lemma}\label{semisidon}
 Let $A$ be a finite set in a commutative group, $|A| = m$, and let $k$ be an integer, $1 \leq k \leq \sqrt{m}$.
  There is a $B\subset A$, $|B| = k$ satisfying
\begin{equation}\label{majdnemsidon}
    |B-B| \geq 1 + \frac{k(k-1)}2 \left( 1- \frac{k(k-1)}{2m} \right) . \end{equation}
\end{Lemma}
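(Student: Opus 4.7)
The natural approach is greedy: build $B = \{b_1, \dots, b_k\}$ one element at a time, using an averaging argument at each step to find a new element whose differences against the current partial $B$ overlap very little with the differences already present. Write $B_i = \{b_1, \dots, b_i\}$, $D_i = B_i - B_i$, and $d_i = |D_i|$, starting from an arbitrary $b_1 \in A$ (so $d_1 = 1$).

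At step $i+1$, for $x \in A$ set $N_i(x) = |\{\nu \leq i : x - b_\nu \in D_i\}|$. The point is that adding $x$ to $B_i$ contributes at least $i - N_i(x)$ genuinely new nonzero differences, since the values $x - b_\nu$ $(\nu \leq i)$ are automatically pairwise distinct. A double count gives
\[
\sum_{x \in A} N_i(x) = \sum_{\nu=1}^i |A \cap (b_\nu + D_i)| \leq i\, d_i,
\]
so the average of $N_i$ on $A$ is at most $i d_i / m$. The hypothesis $k \leq \sqrt m$ forces $d_i \leq 1 + (k-1)(k-2) < m$, so this average is strictly less than $i$; since $N_i \equiv i$ on $B_i$, any minimizer must lie in $A \setminus B_i$. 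Taking $b_{i+1}$ to be such a minimizer yields the recurrence
\[
d_{i+1} \geq d_i\Bigl(1 - \frac{i}{m}\Bigr) + i.
\]

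It then remains to show by induction that $d_i \geq L_i := 1 + \frac{i(i-1)}{2}\bigl(1 - \frac{i(i-1)}{2m}\bigr)$. The base case $i = 1$ is the equality $d_1 = L_1 = 1$. Because the right-hand side of the recurrence is monotone in $d_i$, the inductive step reduces to checking $L_i(1 - i/m) + i \geq L_{i+1}$, which after expansion takes the form
\[
L_i\Bigl(1 - \frac{i}{m}\Bigr) + i - L_{i+1} = \frac{i(i-1)(i+2)}{2m} + \frac{i^3(i-1)^2}{4m^2} \geq 0 \qquad (i \geq 1),
\]
manifestly nonnegative. Setting $i = k$ then yields \eqref{majdnemsidon}.

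The only nontrivial point is the final algebraic check: one has to verify that the prescribed target $L_i$, with its specific $1 - k(k-1)/(2m)$ correction factor, is compatible with the affine recurrence, and the cancellations only work out because of the precise form of that correction. The greedy construction, the double count, and the choice of $b_{i+1}$ via averaging are otherwise completely standard. A minor side issue is making sure $A \setminus B_i$ is nonempty and $d_i < m$ throughout, both of which follow at once from $i \leq k-1 < k \leq \sqrt m$.
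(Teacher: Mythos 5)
Your proof is correct and follows the same greedy-plus-averaging strategy as the paper, which at step $i$ bounds $\sum_{a\in A} z_a \le i^3$ (via solutions of $a-b_\nu=b_u-b_v$), derives the affine recurrence $d_{i+1}\ge d_i + i - i^3/m$, and sums it directly to get \eqref{majdnemsidon}. Your double count $\sum_x N_i(x)=\sum_\nu|A\cap(b_\nu+D_i)|\le i\,d_i$ is a slightly tighter bound on the collision count, at the cost of a multiplicative rather than additive recurrence and hence the somewhat fiddlier inductive verification against $L_i$; both routes are valid and yield the same bound.
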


This lemma is also in Alon and Orilitsky's paper; below we give a slightly simpler proof.

\begin{proof}
  We use induction on $k$. Assume we found a $k$-element subset
   \[ B = \{ b_1, \ldots, b_k \} .\]
   We try to add a further element $a\in A$. The elements $a-b_i$ will be in the difference set of the  set $B' = B \cup \{a\} $; let $z_a$
   be the number of those that are already contained in $B-B$. This quantity does not exceed the number of solutions of
    \[ a - b_i = b_u-b_v, \ 1 \leq i,u,v \leq k \]
    (it may be smaller, as several pairs $u,v$ may exist for a given $i$). Hence
     \[ \sum_{a\in A} z_a \leq k^3 , \]
     consequently there is an $a\in A$ with $z_a \leq k^3/m$. This means that at least $k-k^3/m$ new differences occur, and this provides the
 inductive step.
\end{proof}

By a theorem of Koml\'os, Sulyok, Szemer\'edi \cite{kosusz}, in $\setZ_q$ we can find a set $B\subset A$ of size $|B|>c \sqrt{m}$ which
is a Sidon set, that is, all differences are distinct. In general groups we could only show the analogous result with
 $|B|>c \sqrt[3]{m}$; however, the weaker property given in Lemma \ref{semisidon} is equally applicable for our aims.

 \begin{proof}[Proof of Theorem \ref{randomdelta}.]
We are going to estimate $\Pr \bigl(\Df(R) \geq m\bigr)$. Set $k=[\sqrt{m}]$. By the lemma above, the event
$ \Df(R) \geq m$ is contained in the event
 \[ \exists B :  B-B \subset R, \ |B| =k, \ B  \text{ satisfies } \eqref{majdnemsidon} . \]
 Since \eqref{majdnemsidon} implies
  \[  |B-B|' \geq \frac{|B-B|-1}2 \geq c_4 m \]
with a suitable positive constant $c_4$, for a given $B$ the probability is $ \leq \rho^{c_4m} $. Since the number of $k$-element sets is
less than $q^k$, we obtain
 \[ \Pr \bigl(\Df(R) \geq m\bigr) < q^{\sqrt{m}}   \rho^{c_4m} .\]
This immediately gives the estimate in \eqref{kombibecsles}. The validity of this estimate is not restricted to the
range given in Theorem \ref{randomdelta}; however, for $\rho$ near to 1 we get a better result by applying Theorem \ref{randomlambda}
and the inequality $\df(R) \geq \lambda^+(R)$. This is presented in  the next formula.

This proves part (a); part (b) is the dual formulation.
 \end{proof}

 \begin{Rem}
   One can give a lower estimate for $\Df(R)$ as follows. Select sets $B_1, \ldots, B_m$ satisfying $|B_i| = k$ and
    \[ (B_i-B_i) \cap (B_j-B_j) = \{ 0 \} \]
    whenever $i \neq j$. Then the events $B_i-B_i \subset R$ will be independent and we have
    \begin{align*}
      \Pr \bigl( \Df(R) \geq k \bigr) & \geq \Pr \bigl( B_i-B_i \subset R) \text{ for some } i \bigr) \\
      & = \prod \left( 1 - \rho^{ | B_i-B_i| '} \right) .
    \end{align*}
    To make use of this one needs to find many such $B_i$ with small difference set. This is comparably easy, if $G$ has no element
of order $<k$: we take arithmetic progressions $B_i = \{ 0, b_i, 2b_i, \ldots, (k-1)b_i \} $, and a simple greedy algorithm yields
$m \geq q/k^2 $ such sets. For $\rho=1/2$ this shows that $\Df(R) \gtrsim \log q$ with high probability, so together with Green's bound
this shows the proper order of magnitude for certain groups. For general groups  a weaker form of this
argument  gives  $\Df(R) \gtrsim  \sqrt{ \log q }$.

 \end{Rem}

We now study the threshold as $\Df$ passes from 2 to 3. Elements of order 3 play a special role here. Assume $x$ is an element of
order 3. The difference set of the 3-element set (subgroup) $\{ 0, x , -x \}$ is itself, hence $\Df(A)<3$ is possible only if elements
of order 3 are all absent from $A$. To avoid this we assume that $3\nmid q$, that is, there are no elements of order 3. With some
extra effort the next result can be extended (with a properly modified notion of a random set) to all groups, save those isomorphic
to $\setZ_3^k$.

     \begin{Th} \label{kettoharom}
 Let $G$ be a finite commutative group, $|G|=q$, and assume that $3 \nmid  q$. For $\frac{6}{5}q^{-1}<\rho<q^{-2/3} $ the random set
 $R$ corresponding to probability $\rho$ satisfies
  \[ \Pr(\Df(R) \leq 2) > 1 - q^2\rho^3 .\]
     \end{Th}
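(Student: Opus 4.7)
The plan is to apply the union bound, but to sum over translation classes of 3-sets rather than over individual 3-sets --- the latter overcounts and loses the bound. The event $\Df(R)\ge 3$ is equivalent to the existence of some 3-element set $B\subset G$ with $B-B\subset R$, and the condition $B-B\subset R$ depends only on the difference set $B-B$, which is translation-invariant. Since $3\nmid q$, the stabilizer of any 3-element set under translation has order dividing $3$ and cannot equal $3$ (that would force $3\mid q$), so each translation orbit has exactly $q$ members, and
$$\Pr(\Df(R)\ge 3) \le \sum_{[B]} \Pr(B-B\subset R),$$
where the sum runs over the $\binom{q}{3}/q = (q-1)(q-2)/6$ translation classes of 3-sets in $G$.

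I would then split the classes into arithmetic progressions and non-progressions. For a 3-AP $B=\{a, a+d, a+2d\}$ with $d\ne 0$ and $2d\ne 0$, we have $B-B=\{0,\pm d,\pm 2d\}$; the hypothesis $3\nmid q$ guarantees that the pairs $\{\pm d\}$ and $\{\pm 2d\}$ are distinct (otherwise $3d=0$, forcing $d=0$), so $\Pr(B-B\subset R)=\rho^2$. For a non-AP 3-set the three pairs $\{\pm d_{ij}\}$ are all distinct, so $\Pr(B-B\subset R)=\rho^3$. An AP translation class is determined by its common difference up to sign, so the number of AP classes is $(q-q_1)/2$, where $q_1=|G_1|$; the remaining $(q-1)(q-2)/6 - (q-q_1)/2$ non-AP classes number at most $(q-1)(q-2)/6$.

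Combining and using the hypothesis $\rho>\frac{6}{5q}$, which rewrites as $\rho^2<\frac{5q}{6}\rho^3$, the AP contribution is at most $\frac{q-q_1}{2}\cdot\frac{5q}{6}\rho^3\le\frac{5q^2}{12}\rho^3$, and the non-AP contribution is at most $\frac{(q-1)(q-2)}{6}\rho^3\le\frac{q^2}{6}\rho^3=\frac{2q^2}{12}\rho^3$. Summing gives $\Pr(\Df(R)\ge 3)<\frac{7q^2}{12}\rho^3 < q^2\rho^3$, as required.

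The main subtlety is the passage from individual 3-sets to translation classes: naively summing over 3-sets containing $0$ overcounts each AP difference set by a factor of $3$ (the three APs $\{0,d,2d\}$, $\{-d,0,d\}$, $\{-2d,-d,0\}$ all share the same $B-B$), which is just barely enough to lose the bound for $\rho$ near the lower end $\frac{6}{5}q^{-1}$. Separating the AP contribution --- which carries probability $\rho^2$ instead of $\rho^3$ --- is also essential, and the assumption $3\nmid q$ is precisely what guarantees that the AP difference set $\{0,\pm d,\pm 2d\}$ does not degenerate to fewer than two independent pairs.
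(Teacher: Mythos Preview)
Your proof is correct and follows essentially the same approach as the paper. The paper phrases the event $\Df(R)\ge 3$ as the existence of nonzero $a,b,c\in R$ with $a+b+c=0$, which is equivalent to your translation classes of 3-sets via $B\mapsto(b_1-b_2,\,b_2-b_3,\,b_3-b_1)$; your ``AP / non-AP'' dichotomy is exactly the paper's ``two coincide / all distinct'' split, and the final arithmetic (using $\rho>\frac{6}{5q}$ to convert the $\rho^2$ term into $\frac{5}{6}q\rho^3$) is the same in both. Your counting is slightly tighter (you get $(q-q_1)/2$ AP classes where the paper uses the cruder $q-1$), yielding $\frac{7}{12}q^2\rho^3$ instead of just $q^2\rho^3$, but this is cosmetic.
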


     \begin{proof}
       It is easy to see that the property $\Df(R) \geq 3$ is equivalent to the existence of $a,b,c\in R$, all different from 0, such that $a+b+c=0$. For a given
       $a,b,c\in G$ we have
        \[ \Pr(a,b,c\in R) =
        \begin{cases}
          \rho^3 & \text{if they are all distinct,} \\ \rho^2 & \text{if two coincide .}
        \end{cases} \]
        (All three cannot coincide by the absence of elements of order 3, and one cannot coincide with the negative of another.)
The number of such triples $a,b,c$ containing distinct elements is $<q^2$, order counted, so without ordering it is $<q^2/6$; the number of triples containing two identical elements (that is, $a,a,-2a$) is exactly $q-1$. We obtain
\[ \Pr(\Df(R) \geq 3) < q^2\rho^3/6 + q\rho^2 < q^2 \rho^3 . \]
     \end{proof}
     \begin{Rem}
       If $\Df(R) \leq 2$, its value can be 1 or 2. The probability that it is 1 is exactly $(1-\rho)^{q_1+q_2-1}$; it becomes negligible
around $\rho \sim (\log q)/q$.

With some effort the above theorem could be complemented by an upper estimate showing that $\Pr(\Df(R) \geq 3) \to1$ if $\rho q^{2/3} \to \infty$.
     \end{Rem}

Part (a) of Theorem \ref{norelations} follows from the results of this section. Indeed, if $\rho=q^{-2/3}/2$, then the corresponding
random set satisfies $\df(R)=1/2$ and $\lambda^+(R) < c q^{-1/6} ( \log q)^{1/2} $ with positive probability, according to Theorems
\ref{kettoharom} and \ref{randomlambda}.

\section{Balls in dyadic groups}\label{secdyadic}

In this section we will prove part (b) of Theorem \ref{norelations} by studying some sets in the group $G = \setZ_2^n$ (so now $q=2^n$). The elements will be written as 0-1 sequences. For an $x\in G$
by its \emph{norm} we mean the number of coordinates equal to 1, denoted by $ \|x\|$. We consider the \emph{ball}
 \[ B_k = \{ x\in G: \|x\| \leq k \} ,\]
 and its standard complement, the \emph{antiball}
 \[ A_k = \{ x\in G: \|x\| > k \} \cup \{0\} . \]

 The size of maximal difference sets contained in $B_k$ is known: for even $k<n$ we have
\begin{equation}\label{kleitman}
\overline \Delta(B_k) = \Delta(A_k) = |B_{k/2}| = \sum_{i\leq k/2} \binom{n}{i} ,
\end{equation}
see Kleitman \cite{kleitman66}.
Much less is known about $\Delta(B_k)$, in spite of much attention, due to its interpretation as the maximal size of a set of
error-detecting codes. In this context the inequality $\delta (B_k)\leq \overline \delta (B_k)$ is known as the Hamming bound, while Delsarte \cite{delsarte73} introduced the improved bound $\delta(B_k)\leq \lambda^-(B_k)$. Asymptotically, as $k/n\to \gamma$ for some $0< \gamma <1$, the best current upper estimate  for
$\lambda^-(B_k)$ is by McEliece et al. \cite{mcelieceetal}, and numerical results in \cite{BJ} suggest this estimate actually gives the correct value of $\lambda^-(B_k)$. The best lower bound for $\delta(B_k)$ is the Gilbert-Varshamov bound given by the usual covering argument (see \cite{Sloane}). Samorodnitsky \cite{samorodnitsky01} proved that the Delsarte bound cannot match the Gilbert-Varshamov bound.

\medskip

In the sequel we apply Samorodnitsky's method from \cite{samorodnitsky98} to estimate certain $\lambda$'s of the sets
$B_k$ and $A_k$. We focus on the case $k>n/2$, which is uninteresting from the point of view of coding theory. Samorodnitsky's aspect is rather different from ours, so we repeat a part of the argument in our words. The central
ingredient is the following inequality, which is Lemma 3.3 in \cite{samorodnitsky98}.

\begin{Lemma}
  Let $F$ be a polynomial of degree at most $k$, satisfying $F(0)=1$ and $F(i) \geq 0$ for integer values of $i$, $0 \leq i \leq n$.
Assume $k\leq n$ and write $\alpha=k/(2n)$. We have
\begin{equation}\label{samorodni}
\sum_{i=0}^n \binom ni F(i) \geq c_1 n^{-1/4} {\binom{2n}{k}}^{-1/2} 2^n \geq c_2 \alpha^{1/4} \left( 2 \alpha^\alpha (1-\alpha)^{1-\alpha} \right)^n
\end{equation}
with positive absolute constants $c_1, c_2$.
\end{Lemma}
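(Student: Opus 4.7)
The second inequality is pure Stirling applied to $\binom{2n}{k}$: with $H(\alpha)=-\alpha\log_2\alpha-(1-\alpha)\log_2(1-\alpha)$ and the central-binomial estimate $\binom{2n}{k}\sim 2^{2nH(\alpha)}/\sqrt{4\pi n\alpha(1-\alpha)}$, one gets $2^n\binom{2n}{k}^{-1/2}\geq c(n\alpha(1-\alpha))^{1/4}(2\alpha^\alpha(1-\alpha)^{1-\alpha})^n$, and the prefactor $n^{-1/4}$ cancels the $n^{1/4}$, leaving $c_2\alpha^{1/4}(2\alpha^\alpha(1-\alpha)^{1-\alpha})^n$ (using that $(1-\alpha)^{1/4}$ is harmlessly bounded). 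So I concentrate on the first inequality.

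The plan is to pass to Boolean Fourier analysis on $G=\mathbb{Z}_2^n$. Set $f(x)=F(\|x\|)$; then $f\geq 0$, $f(0)=1$, and the target sum is $\hat f(0)=\sum_x f(x)$. Expanding $F=\sum_{j\leq k}a_jK_j$ in the Krawtchouk basis, and using the self-dual relation $\sum_j K_j(i)K_j(m)=2^n\delta_{im}$, one computes $\hat f(y)=2^n a_{\|y\|}$, so the degree condition $\deg F\leq k$ is equivalent to $\hat f$ being supported in the Hamming ball $B_k\subset G$. The task becomes: lower bound $\hat f(0)$ over nonnegative radial $f$ with $f(0)=1$ and $\operatorname{supp}\hat f\subseteq B_k$. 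A naive Cauchy--Schwarz on the support, combined with Parseval, yields only $\sum_x f(x)^2\geq 2^n/|B_k|$, and the conversion $\sum f^2\leq \|f\|_\infty\sum f$ breaks down because $\|f\|_\infty$ cannot be controlled by $f(0)$.

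To recover the sharper factor $\binom{2n}{k}^{-1/2}$, the right move is a tensor doubling. Consider $g=f\otimes f$ on $\mathbb{Z}_2^{2n}$, so $g\geq 0$, $g(0)=1$, $\hat g=\hat f\otimes \hat f$ is supported in $B_k\times B_k$, and $\sum_z g(z)=\hat f(0)^2$. After $S_{2n}$-symmetrisation, $g$ descends, via $z\mapsto\|z\|$, to a nonnegative polynomial $\tilde F$ of degree $\leq 2k$ on $\{0,\dots,2n\}$ with $\tilde F(0)=1$ and $\sum_s\binom{2n}{s}\tilde F(s)=\hat f(0)^2$. One then applies a reproducing-kernel (Christoffel function) bound at $s=0$ for polynomials of degree $\leq k$ on $\{0,\dots,2n\}$ with the hypergeometric weight $\binom{2n}{s}$; this Christoffel function at $0$ equals $4^n/\sum_{j\leq k}\binom{2n}{j}\asymp 4^n/\binom{2n}{k}$. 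The crucial point is that the tensor-square structure of $\hat g$ (through an additional Cauchy--Schwarz pairing its two $\hat f$-factors against the degree-$k$ reproducing kernel on $\{0,\dots,2n\}$) lets one invoke the degree-$k$ bound instead of the much weaker degree-$2k$ bound. Taking square roots then gives $\hat f(0)\gtrsim 2^n/\binom{2n}{k}^{1/2}$, and the Stirling ratio $\binom{2n}{k}^{1/2}/\binom{n}{k/2}\asymp n^{1/4}$ supplies the missing $n^{-1/4}$ factor.

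The hard part is this last Christoffel-reduction step: rigorously leveraging the tensor-square form of $\hat g$ so that the effective ``Fourier support'' in $\mathbb{Z}_2^{2n}$ is the radius-$k$ ball $B_k^{(2n)}$ (of size $\sim\binom{2n}{k}$) rather than the much larger radius-$2k$ ball or the product $B_k\times B_k$ (both of which are exponentially larger for $\alpha<1/2$). The precise tracking of constants and of the $n^{-1/4}$ prefactor is delicate, and is the genuine technical content of Samorodnitsky's argument.
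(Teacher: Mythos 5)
The paper does not prove this lemma; it is quoted verbatim as Lemma~3.3 of Samorodnitsky \cite{samorodnitsky98} (``The central ingredient is the following inequality, which is Lemma~3.3 in \cite{samorodnitsky98}''), so there is no in-paper argument to compare against. On its own merits, your sketch gets the framework right: the passage to a radial function $f(x)=F(\|x\|)$ on $\ZZ_2^n$ with $\hat f$ supported on $B_k$, the Krawtchouk identification, and the Stirling computation for the second inequality are all correct. Your diagnosis of why the direct Cauchy--Schwarz plus Parseval route only yields $\sum_x f(x)^2\geq 2^n/|B_k|$, and why one cannot convert this via $\|f\|_\infty$, is also accurate.

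However, the proposal contains a genuine, unrepaired gap at precisely the step you flag yourself. After tensoring, $\hat g=\hat f\otimes\hat f$ is supported on $B_k\times B_k\subset\ZZ_2^{2n}$, and after symmetrisation the resulting radial function has degree $\leq 2k$, so any off-the-shelf Christoffel/reproducing-kernel bound gives at best a denominator like $\sum_{j\leq 2k}\binom{2n}{j}$ rather than the required $\binom{2n}{k}$. These differ by an exponential factor. The claimed ``additional Cauchy--Schwarz pairing the two $\hat f$-factors against the degree-$k$ reproducing kernel'' is asserted but not carried out, and the straightforward attempts (Cauchy--Schwarz on $\hat g$ over its support, or pairing $g$ against the indicator of $B_k^{(2n)}$) merely reproduce the square of the weak one-variable bound. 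You concede this step is ``the genuine technical content of Samorodnitsky's argument,'' which is exactly right -- meaning what you have written is an informed outline of the surrounding structure, not a proof. To close it one would need Samorodnitsky's actual Christoffel-function estimate that exploits the full nonnegativity of $F$ on the grid $\{0,\dots,n\}$ (not merely the nonnegativity of a square), together with the precise quantitative accounting that produces the $n^{-1/4}$ prefactor; none of this is present in the sketch.
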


\begin{Th}
  Assume $k\leq n$ and write $\alpha=k/(2n)$,
   \[ \beta = - ( \alpha \log_2 \alpha + (1-\alpha) \log_2 (1-\alpha)) .\]
We have
\begin{equation}\label{blambda}
  \lambda(B_k) \geq c_2 \alpha^{1/4} \left(  \alpha^\alpha (1-\alpha)^{1-\alpha} \right)^n =  c_2 \alpha^{1/4} q^{-\beta},
\end{equation}

\begin{equation}\label{alambda}
  \lambda(A_k) \leq c_3 \alpha^{-1/4} q^{\beta-1},
\end{equation}
with positive absolute constants $c_2, c_3$.
\end{Th}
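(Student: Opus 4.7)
My plan is to reduce the computation of $\lambda(B_k)$ to exactly the minimization problem that Samorodnitsky's lemma addresses, and then obtain the bound on $\lambda(A_k)$ via the duality of Theorem \ref{duality}.

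First, I would exploit symmetry. The ball $B_k$ is invariant under every coordinate permutation, and coordinate permutations are automorphisms of $G=\mathbb{Z}_2^n$. By Proposition \ref{automorph2}, there is an extremal $f\in\mathcal{S}(B_k)$ with $\hat f\ge0$ that is permutation-invariant; equivalently $f(x)=g(\|x\|)$ for some $g$ supported on $\{0,1,\dots,k\}$. Since the characters of $G$ are real, permutation invariance passes to $\hat f$ as well: $\hat f(y)=F(\|y\|)$ for some function $F$ on $\{0,\dots,n\}$. The classical identity $\sum_{\|x\|=i}(-1)^{x\cdot y}=K_i(\|y\|;n)$, where $K_i$ is the Krawtchouk polynomial of degree $i$, gives
\[
F(j)=\sum_{i=0}^{k} g(i)\,K_i(j;n),
\]
so $F$ is a \emph{polynomial in $j$ of degree at most $k$}. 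Conversely, the orthogonality relation $\sum_j\binom{n}{j}K_i(j)K_{i'}(j)=2^n\binom{n}{i}\delta_{ii'}$ shows that the map $g\mapsto F$ is a bijection between functions on $\{0,\dots,k\}$ and polynomials of degree $\le k$.

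Next I would rewrite the ratio $f(0)/\hat f(\mathbf{1})$ in the $F$-variables. Using $K_i(0;n)=\binom{n}{i}$ we get $\hat f(\mathbf{1})=F(0)$, while inverting the Krawtchouk transform yields $g(0)=2^{-n}\sum_{j=0}^n\binom{n}{j}F(j)$. Therefore, after normalizing $F(0)=1$,
\[
\lambda(B_k)=\min_{F}\,\frac{1}{2^n}\sum_{j=0}^n\binom{n}{j}F(j),
\]
the minimum being taken over polynomials $F$ of degree $\le k$ with $F(0)=1$ and $F(j)\ge 0$ for $j\in\{0,\dots,n\}$. This is precisely the class of polynomials in the quoted lemma, which gives
\[
\sum_{j=0}^n\binom{n}{j}F(j)\ge c_2\alpha^{1/4}\bigl(2\alpha^\alpha(1-\alpha)^{1-\alpha}\bigr)^{n}.
\]
Dividing by $2^n$ and using $\alpha^\alpha(1-\alpha)^{1-\alpha}=2^{-\beta}$ together with $q=2^n$ yields \eqref{blambda}.

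Finally, \eqref{alambda} is immediate from Theorem \ref{duality}: $B_k$ and $A_k$ are standard complements, so $\lambda(A_k)\lambda(B_k)=1/q$, and substituting the lower bound \eqref{blambda} gives $\lambda(A_k)\le c_2^{-1}\alpha^{-1/4}q^{\beta-1}$, so take $c_3=1/c_2$. The main substantive point of the argument is the translation of the Fourier-positivity constraint into pointwise nonnegativity of a degree-$k$ polynomial on integers $\{0,\dots,n\}$, which is what makes Samorodnitsky's lemma directly applicable; the remainder is a normalization and the duality relation already established.
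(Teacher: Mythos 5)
Your argument matches the paper's proof step for step: symmetrize via Proposition \ref{automorph2}, identify $\hat f$ with a degree-$\le k$ polynomial $F$ in the Hamming weight through Krawtchouk polynomials, express $\hat f(\1)=F(0)$ and $f(0)=q^{-1}\sum_j\binom{n}{j}F(j)$ by Fourier inversion, apply Samorodnitsky's lemma, and finish \eqref{alambda} by duality. The only addition is the (correct but unneeded) observation that $g\mapsto F$ is a bijection; the paper only requires the forward implication to get the lower bound.
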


\begin{proof}
We want to estimate $f(0)/\hat f(\1)$ for functions $f\in \mathcal S(B_k)$ such that $\hat f\geq 0$. By Proposition \ref{automorph2} we may assume that $f$
is invariant under automorphisms that leave $B_k$ fixed. Permutations of coordinates are such automorphisms, hence $f$ depends only
on the number of coordinates equal to 1. This means that there are real numbers $a_0, \ldots, a_k$ such that $f(x)=a_i$ if $\|x\|=i\leq k$,
and  $f(x)=0$ if $\|x\| > k$. Consequently
\begin{equation}\label{permut}
\hat f(\gamma) = \sum_{i=0}^k a_i \sum_{\|x\|=i} \gamma(x) .\end{equation}
The characters of $G$ are easily described in the form
\[ \gamma_y(x) = (-1)^{\sk{x,y} }, \ y\in G \]
where $\sk{x,y}$ is the scalar product in the usual sense, so it is an integer between 0 and $n$.
This defines a natural norm for characters; we write $\|\gamma\|=\|y\|$ if $\gamma=\gamma_y$.

It is easily seen, by
  grouping the elements $x\in G$ according to the value of $j=\sk{x,y}$ that whenever $\|y\|=m$, we have
   \[     \sum_{\|x\|=i} \gamma_y(x) = \sum_{j=0}^{\min (i,m)} (-1)^j \binom{m}{j} \binom{n-m}{i-j} . \]
   The important point is that this is a polynomial of degree $i$ in $m$ (these are called Krawchouk polynomials).
   By substituting this into \eqref{permut} we obtain that
    \[ \hat f(\gamma) = F( \|\gamma\| ),\]
    where $F$ is a polynomial of degree at most $k$. We have
    \[ \hat f(\1) = F( 0 ) \]
    and, by Fourier inversion,
     \[ f(0) = \frac{1}{q} \sum_\gamma  \hat f(\gamma) = \frac{1}{q} \sum_\gamma  F( \|\gamma\| ) = \frac{1}{q} \sum_{m=0}^n \binom nm F(m) . \]
     Inequality \eqref{blambda} now follows by applying \eqref{samorodni}, and inequality \eqref{alambda} by duality (Theorem \ref{duality}).
\end{proof}

So far we did not succeed in finding a function that would constructively demonstrate inequality \eqref{alambda}.

We complement these inequalities by some easy bounds for $\lambda^\pm$.

\begin{Th}
  Assume $n/2-1<k\leq n$.

We have
\begin{equation}\label{blambdapm}
  \lambda^\pm (B_k) \leq \frac{2k+2}{q(2k+2-n)},
\end{equation}

\begin{equation}\label{alambdapm}
  \lambda^\pm (A_k) \geq  1 - \frac{n}{2k+2} .
\end{equation}
\end{Th}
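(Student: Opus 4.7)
The plan is to observe first that $B_k$ and $A_k$ are standard complements (for $k<n$), so by the self-duality of $\lambda^\pm$ in Theorem \ref{duality} we have $\lambda^\pm(B_k)\,\lambda^\pm(A_k)=1/q$. A routine check shows the two claimed bounds \eqref{blambdapm} and \eqref{alambdapm} are exactly equivalent under this identity, so it suffices to prove \eqref{blambdapm} by exhibiting a suitable test function.

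The natural candidate is the simplest affine-in-$\|x\|$ function that changes sign exactly at the boundary of $B_k$, namely
\[
f(x) \;=\; 2\bigl(k+1-\|x\|\bigr).
\]
I would verify membership in $\mathcal{S}^\pm(B_k)$ directly: $f(x)\ge 2>0$ whenever $\|x\|\le k$ and $f(x)\le 0$ whenever $\|x\|\ge k+1$, so $f\ge 0$ on $B_k$ and $f\le 0$ on $G\setminus B_k$, and $f\not\equiv 0$ since $f(0)=2(k+1)>0$.

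The key computation is the Fourier transform. Writing $\ell(x):=n-2\|x\|=\sum_{i=1}^{n}(-1)^{x_i}=\sum_{i=1}^{n}\gamma_{e_i}(x)$, where $e_i\in G$ is the $i$-th standard basis vector, we get
\[
f(x)\;=\;(2k+2-n)+\ell(x)\;=\;(2k+2-n)\,\mathbf{1}(x)+\sum_{i=1}^{n}\gamma_{e_i}(x).
\]
By orthogonality of characters this gives $\hat f(\mathbf{1})=(2k+2-n)q$, $\hat f(\gamma_{e_i})=q$ for $i=1,\dots,n$, and $\hat f(\gamma)=0$ for every other $\gamma\in\hat G$. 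Using the standing hypothesis $n/2-1<k$, one has $2k+2-n>0$, so $\hat f\ge 0$ everywhere. Finally,
\[
\frac{f(0)}{\hat f(\mathbf{1})}\;=\;\frac{2(k+1)}{(2k+2-n)\,q}\;=\;\frac{2k+2}{q(2k+2-n)},
\]
which establishes \eqref{blambdapm}, and then \eqref{alambdapm} follows from duality.

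There isn't really a hard step here; the only thing one has to be careful about is that the construction requires $2k+2>n$ (which is exactly the hypothesis $k>n/2-1$) both so that $f$ has the correct sign pattern on $G\setminus B_k$ and so that the constant coefficient $2k+2-n$ is nonnegative, guaranteeing $\hat f(\mathbf{1})\ge 0$. The whole proof is essentially the observation that the linear function $n-2\|x\|$ already has a nonnegative Fourier transform supported on weight-$1$ characters, so adding the right constant turns it into an admissible $\mathcal{S}^\pm(B_k)$ function.
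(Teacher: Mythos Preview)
Your proof is correct and essentially identical to the paper's: both construct the test function $f(x)=2(k+1-\|x\|)=(2k+2-n)+\sum_j\gamma_{e_j}(x)$, read off $f(0)$ and $\hat f(\mathbf 1)$, and deduce the bound for $A_k$ by the self-duality of $\lambda^\pm$. You give a little more detail (explicitly listing all Fourier coefficients to verify $\hat f\ge 0$), but the argument is the same.
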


\begin{proof}
  Consider the characters, corresponding to the basis vectors (with some abuse of notation):
   \[ \gamma_j(x_1, \ldots, x_n) = (-1)^{x_j} = 1 - 2x_j .\]
   Clearly
    \[ \sum \gamma_j(x) = n - 2 \|x\|,\]
    hence the function
     \[ f(x) = 2k+2-n + \sum \gamma_j(x) = 2(k+1- \|x\| ) \]
     satisfies
      \[ f \in \iS^\pm(B_k), \ f(0)= 2k+2, \ \hat f(\1) = q(2k+2-n)  . \]
      This shows \eqref{blambdapm}, and \eqref{alambdapm} follows by duality (Theorem \ref{duality}).
\end{proof}

Let us summarize the results for the set $A_k$ in the case when $\frac{1}{4} < \alpha =\frac{k}{2n} < \frac{1}{2}$. By equation \eqref{kleitman} and standard approximations for the binomial coefficients we have $\delta(A_k)=q^{\beta-1+o(1)}$. Equation \eqref{alambda} shows that $\lambda(A_k)$ is in the same range $\lambda(A_k)=q^{\beta-1+o(1)}$. On the other hand, equation \eqref{alambdapm} shows that $\lambda^{\pm}(A_k)\geq 1-\frac{1}{4\alpha}$. If $\alpha \approx 1/2$ this proves part (b) of Theorem \ref{norelations}.

\begin{Ex}\label{exlambdapm}
  We show how examples of $\lambda^- < \lambda^\pm $ are related to monotonicity of $\lambda^\pm $. Let $A$ be a set such that
  $\lambda^-(A) < \lambda^\pm (A) $, e.g. the antiball $A_k$ above. Take an $f\in \iS^- (A)$ which produces the value of  $\lambda^-(A)$, and put
   \[ A^+ = \{ x: f(x) > 0 \} . \]
   We have clearly $A^+\subset A$ and $f\in\iS^\pm (A^+) $, hence
   $$ \lambda^\pm (A^+) \leq f(0)/\hat f(\1) = \lambda^-(A) < \lambda^\pm (A). $$
\end{Ex}


\end{document}